\documentclass[a4paper,10pt]{amsart}
\usepackage{amsfonts,amsthm,amsmath,amssymb}
\usepackage{graphicx}
\usepackage[utf8]{inputenc}

\usepackage{hyperref}

\usepackage{todonotes}
\usepackage{xypic}

\newcommand{\FF}{{\mathbb{F}}}

\newcommand{\KK}{{\mathbb{K}}}
\newcommand{\ZZ}{{\mathbb{Z}}}
\newcommand{\hG}{\widehat{G}}

\newcommand{\cA}{{\mathcal{A}}}
\newcommand{\cF}{{\mathcal{F}}}
\newcommand{\cG}{{\mathcal{G}}}
\newcommand{\cK}{{\mathcal{K}}}
\newcommand{\cM}{{\mathcal{M}}}
\renewcommand{\cL}{{\mathcal{L}}}

\newcommand{\Ker}{\mathop{\mathrm{Ker}}}
\renewcommand{\Im}{\mathop{\mathrm{Im}}}

\newcommand{\mN}{{\mathcal{N}}}
\newcommand{\Id}{{\mathop{\mathrm{Id}}}}


\theoremstyle{plain}
\newtheorem{theorem}{Theorem}[section]
\newtheorem{proposition}[theorem]{Proposition}
\newtheorem{lemma}[theorem]{Lemma}

\newtheorem{problem}[theorem]{Problem}

\theoremstyle{remark}

\newtheorem{remark}[theorem]{Remark}
\newtheorem{question}[theorem]{Question}

\theoremstyle{definition}
\newtheorem{definition}[theorem]{Definition}
\newtheorem{example}[theorem]{Example}

\title{Ribbon graphs and bialgebra of Lagrangian subspaces}
\author{Victor Kleptsyn}
\address{CNRS, Institut de Recherches Math\'ematiques de Rennes (UMR 6625 du CNRS)\\
Campus Beaulieu, 35042 Rennes, France
}
\email{victor.kleptsyn@univ-rennes1.fr}
\author{Evgeny Smirnov}
\address{
Department of Mathematics and AG Laboratory\\ National Research University  Higher School of Economics, Russian Federation\\
Vavilova St. 7, 112312 Moscow, Russia}
\address{Independent University of Moscow, Bolshoi Vlassievskii per., 11, 119002 Moscow, Russia}
\email{esmirnov@hse.ru}
\date{\today}
\subjclass[2010]{57M20, 57M27 (primary), 57M15, 05C25 (secondary)}
\keywords{Ribbon graph, chord diagram, Vassiliev knot invariant}
\dedicatory{Dedicated to the memory of Sergei Vassilievich Duzhin}

\begin{document}

\maketitle

\begin{abstract}
To each ribbon graph we assign a so-called $L$-space, which is a Lagrangian subspace in an even-dimensional vector space with the standard symplectic form. This invariant generalizes the notion of the intersection matrix of a chord diagram. Moreover, the actions of Morse perestroikas (or taking a partial dual) and Vassiliev moves on ribbon graphs are reinterpreted nicely in the language of $L$-spaces, becoming changes of bases in this vector space. Finally, we define a bialgebra structure on the span of $L$-spaces, which is analogous to the 4-bialgebra structure on chord diagrams.
\end{abstract}

\tableofcontents

\section{Introduction}

Chord diagrams appear as an essential tool in the study of Vassiliev finite-type knot invariants. They are constructed as follows: to each singular knot, i.e., a knot with a finite number of simple self-intersection points, we assign a circle, corresponding to the preimage of the knot, with the preimages of the self-intersection points joined by chords.

Chord diagrams span a graded vector space (over an arbitrary field) with grading corresponding to the number of chords. Since the finite-type knot invariants produce functions invariant under 4-term relations, they can be considered as functions on the quotient of this vector space by the subspace generated by the 4-term relations.  The latter vector space can be equipped with operations of multiplication and comultiplication, turning it into a bialgebra.

The combinatorial data encoded in chord diagrams are quite involved. In some situations one can pass from chord diagrams to their intersection matrices: these are symmetric 0-1 matrices with rows and columns corresponding to the chords, such that each element is 1 if the corresponding two chords intersect and 0 otherwise (all the diagonal entries are 0). Alternatively, instead of matrices one can consider their intersection graphs. As in the case of chord diagrams, we can introduce the four-term relation and bialgebra structure on the algebra spanned by intersection matrices. The correspondence between chord diagrams and their intersection matrices is neither injective nor surjective; however, weight systems on intersection matrices can be lifted to a weight system on chord diagrams,
so intersection matrices can serve a valuable source of new weight systems.

The notion of chord diagram admits several natural generalizations. First, we can study (singular) links instead of knots; they give rise to  multicomponent chord diagrams. Further, following V.~Arnold~\cite{Arnold93}, instead of knots we can consider smooth curves in the plane with possible transversal self-intersections. In this case chord diagrams are replaced by \emph{framed} chord diagrams, with chords of two different types. These chords can be thought of as ribbons joining pairs of segments on a circle; these ribbons can be either untwisted or twisted. If all ribbons are untwisted, the diagram is unframed, i.e., ordinary. 

For framed chord diagrams one can also introduce the four-term relation; this was done by S.~Lando in \cite{Lando06}, cf. also \cite{LZ05}. However, the space of framed chord diagrams modulo the four-term relation does not form a bialgebra anymore. It admits a coalgebra structure, but the multiplication of framed chord diagrams is not well-defined, cf.~\cite{IlyutkoManturov15}. Recently M.~Karev~\cite{Karev14} showed that this space is a module over the algebra of unframed chord diagrams.

However, for a  framed diagram we still can define the notion of its intersection matrix (or, equivalently, of its intersection \emph{framed graph}, i.e., a graph with colored vertices of two different colors, corresponding to twisted and untwisted ribbons). The four-term relation descends to the space of intersection graphs. In \cite{Lando00} it was shown that the space of framed graphs modulo the four-term relation still forms a bialgebra (while the space of framed chord diagrams does not).

Both unframed and framed chord diagrams can be considered in a much more general context: they are  \emph{ribbon graphs}, or \emph{fat graphs}, with one vertex. Informally speaking, ribbon graphs are surfaces glued from disks representing vertices and ribbons representing edges (formal definition will be given in Sec.~\ref{sec:ribbons}). Thus, one can think of ribbon graphs with more than one vertex as of \emph{framed multicomponent chord diagrams}.

In this paper we introduce a new object, called the \emph{$L$-space}, for each ribbon graph. For a ribbon graph with $n$ edges, this is a Lagrangian subspace in a $2n$-dimensional space over $\FF_2$ with a standard symplectic form.  It generalizes the notion of the intersection matrix. A vector space spanned by $L$-spaces has a bialgebra structure, which extends the bialgebra structure on the space spanned by the set of (framed) intersection matrices.

The notions  of Vassiliev moves and four-term relations for chord diagrams can be generalized to ribbon graphs. We show that these notions are reinterpreted nicely in terms of $L$-spaces: they correspond to certain base changes in the ambient space. Moreover, for ribbon graphs there is an operation of \emph{partial dual}, or a \emph{Morse perestroika} with respect to a subset of chords, defined by Chmutov \cite{Chmutov09}; analogues of this operation also appeared before in the study of graph invariants (see \cite{ABS02a} and \cite{ABS02b}). We show that this operation also descends to $L$-spaces as a particularly nice base change.

Finally, we define the bialgebra of $L$-spaces purely combinatorially. This notion generalizes  the notion of the bialgebra of  graphs, introduced by S.~Lando in~\cite{Lando00}. More specifically, we define a bialgebra structure on the vector space freely spanned by all possible Lagrangian spaces. This bialgebra contains the bialgebra of framed graphs. The Vassiliev moves, introduced earlier, allow us to extend the four-term relation from the bialgebra of framed graphs to this larger bialgebra. The set of four-term relations generates an ideal; the quotient bialgebra is a natural analogue of Lando's bialgebra of graphs. An interesting question is to find the dimensions of its graded components.

\subsection*{Plan of the paper} This paper is organized as follows. In Section~\ref{sec:ribbons} we recall the definition of  ribbon graphs and define the operations of taking a partial dual and Vassiliev moves. In Section~\ref{sec:l-space} we provide a construction of $L$-space of a ribbon graph and discuss its relation with the notion of intersection graph of a chord diagram. Subsection~\ref{ssec:vassiliev} we provide explicit formulas for the action of partial duals and Vassiliev moves of ribbon graphs on the corresponding $L$-spaces. In Section~\ref{sec:bialgebra} we prove our main result: we describe a bialgebra structure on the vector space spanned by $L$-spaces, and discuss its relation to the bialgebras of chord diagrams and graphs. Finally, in Section~\ref{sec:misc} we  provide explicit formulas for the action of partial duals on chord diagrams point out a relation of our operations on ribbon graphs with certain graph invariants defined by Arratia, Bollob\'as, and Sorkin, and conclude with discussing how our construction could be related to the study of multicomponent plane curves. 

\section{Ribbon graphs}\label{sec:ribbons}

\subsection{Definition of ribbon graphs} 

We begin with a formal definition, taken from \cite{Chmutov09} and going back to \cite{BollobasRiordan02}.

\begin{definition} A \emph{ribbon graph} $G$ is a surface (possibly non-orientable) with boundary, represented as the union of two sets of closed topological disks called \emph{vertices} $V(G)$ and \emph{edges} $E(G)$, satisfying the following conditions:
\begin{itemize}
\item edges and vertices intersect by disjoint line segments;

\item each such segment lies in the closure of precisely one edge and one vertex;

\item each edge contains two such segments.
\end{itemize}
\end{definition}

We consider ribbon graphs up to a homeomorphism preserving the decomposition into vertices and edges. 

Informally speaking, vertices can be thought of as disks, and edges are rectangles attached to the boundary of these disks by two opposite sides; see Fig.~\ref{fig:ribbongraphs} below.

\begin{figure}[h!]
\center
\includegraphics{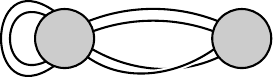}
\caption{Example of a ribbon graph}\label{fig:ribbongraphs}
\end{figure}

Note that an edge can join a vertex with itself; such an edge will be called a \emph{loop}. A loop with a vertex attached to it can be homeomorphic either to a cylinder or to a M\"obius band; it is then called \emph{orientable} or \emph{disoriented}, or \emph{twisted}. Note that for an edge joining two different vertices there is no natural notion of orientation (however we can say whether two such edges have the same orientation or not).

Alternatively, a ribbon graph can be viewed as a usual graph with some additional data: for each vertex we fix a cyclic order of half-edges adjacent to this vertex, and for each cycle in this graph we fix its orientation, i.e. we say whether this cycle is orientable (a cylinder) or disorienting (a M\"obius band), with natural conditions of coherence of orientation for cycles with common edges.

Recall the definition of a framed chord diagram (cf.~\cite{Lando06}).

\begin{definition} A \emph{chord diagram of order $n$} is a circle with $n$ pairs of pairwise distinct points, considered up to a diffeomorphism (not necessarily orientation-preserving), called \emph{chords}. \emph{Framing} is a map from the set of chords into the set $\FF_2=\{0,1\}$; the chords mapped into 0 and 1 are called \emph{orientable} and \emph{disoriented}, respectively.
\end{definition}

We will represent chord diagrams by circles with marked points joined by arcs, solid for the oriented chords and dashed for the disorienting ones.

Chord diagrams (both framed and unframed ones) can be considered as ribbon graphs with one vertex. To pass from a chord diagram to the ribbon graph, we need to consider the circle as the boundary of a unique vertex and to ``thicken'' each of its chords, replacing it by a ribbon attached to the boundary according to its framing; see Fig.~\ref{fig:chordribbon} below. The resulting surface is orientable if and only if all chords of the diagram are orientable (i.e., have framing 0).

\begin{figure}[h!]
\center
\includegraphics{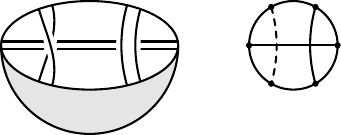}
\caption{Chord diagram and the corresponding ribbon graph}\label{fig:chordribbon}
\end{figure}

\subsection{Vassiliev moves for ribbon graphs}

In this subsection we introduce two families of involutive operations on ribbon graphs, called \emph{first} and \emph{second Vassiliev moves}. These operations generalize Vassiliev moves for chord diagrams, used in the definition of the four-term relation (see below).

Let $G$ be a ribbon graph, and let $v$ be its vertex of degree at least 2. Consider two edges $i$ and $j$ attached to $v$ in such a way that there are no other edges attached to the same vertex between them\footnote{Strictly speaking, the Vassiliev moves depend not only on $i$ and $j$, but also on choosing a half-edge on each of them.}; i.e., the corresponding half-edges are neighboring for the cyclic order on $v$. Let us define the \emph{Vassiliev moves} for $G$ as follows.

The result of the first move $v_1^{ij}(G)$ is obtained from $G$ by attaching $i$ and $j$ to $v$ along the same segments, but in the different order, as shown on Fig.~\ref{fig:ribbon_moves}. The second Vassiliev move acts on $G$ as follows: let us take the second edge $j$ and slide it along the edge $i$ until it reaches the vertex on its second endpoint of it (see Fig.~\ref{fig:ribbon_moves}). Note that $v_2$ does not affect the topology of a ribbon graph: as surfaces $G$ and $v^{ij}_2(G)$ are homeomorphic (while, of course, as ribbon graphs they are different).

While describing the second Vassiliev move we will call $i$ the \emph{fixed chord}, and $j$ will be referred to as the \emph{moving chord}. Note that the choice of moving and fixed chords is important: $v_2^{ij}\neq v_2^{ji}$.

\begin{figure}[h!]
\center
\includegraphics[width=2.5cm]{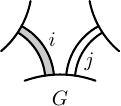} \quad  
\includegraphics[width=2.5cm]{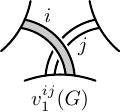} \quad
\includegraphics[width=2.5cm]{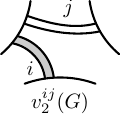} \quad 
\includegraphics[width=2.5cm]{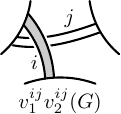}
\caption{Vassiliev moves for ribbon graphs}\label{fig:ribbon_moves}
\end{figure}

It is easy to see that the Vassiliev moves are involutive operations, and that they commute. They do not change the number of edges and vertices of $G$.

An important particular case is when $G$ has only one vertex, i.e. corresponds to a chord diagram. If in addition $G$ is oriented, we get the classical notion of Vassiliev moves for chord diagrams, used in the Vassiliev knot invariant theory, depicted on Figure~\ref{fig:chord_moves} below (see \cite{LZ05} or \cite{CDM} for details). On this figure we indicate only the fixed and the moving chord; the remaining part of the diagram is the same for all four chord diagrams, corresponding to $G$, $v_1^{ij}(G)$, $v_2^{ij}(G)$ and $v_1^{ij}v_2^{ij}(G)$.

\begin{figure}[h!]
\center
\includegraphics{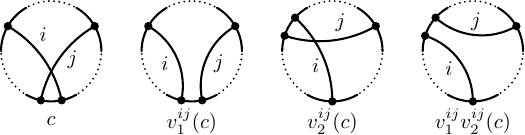}
\caption{Vassiliev moves for chord diagrams}\label{fig:chord_moves}
\end{figure}

In \cite{Lando06} Lando generalizes the notion of Vassiliev moves for the case of \emph{framed} chord diagrams, distinguishing between three types of Vassiliev moves for all possible framings of the fixed and a moving chord. In our terms framed chord diagrams correspond to ribbon graphs with one vertex, not necessarily oriented. It turns out that for one-vertex graphs our definition of Vassiliev moves coincides with the one from \cite{Lando06}.

\begin{proposition} Let $G$ be a ribbon graph with one vertex (possibly non-oriented). Then the Vassiliev moves applied to $G$ coincide with the Vassiliev moves for framed chord diagrams defined in~\cite[Fig.~2--4]{Lando06}.
\end{proposition}

The proof of this proposition is straightforward.

\subsection{Partial duals, relation with Vassiliev moves}\label{ssec:partialdual} In \cite{Chmutov09} S.~Chmutov gives a definition of the \emph{partial dual} of a ribbon graph $G$ with respect to an edge $i\in E(G)$. Here we recall this definition in a slightly modified way. 
\begin{definition}
Let $G$ be a ribbon graph, $i$ one of its ribbons. Take $S$ to be the union of borders of vertices (that are discs, and hence $S$ is a union of circles), and consider the symmetric difference $S':=S\mathop{\Delta} \partial (i)$: we remove from $S$ two arcs along which $i$ touches it, and add back two ``side boundary'' arcs from~$\partial(i)$. Then, $S'$ is still a union of circles (as closed one-dimensional manifold). Take a union of disc attached all the circles from $S'$ (these will be the new vertices), and glue back the same ribbons as in~$G$ (see Fig.~\ref{f:Morse}). The obtained graph is called \emph{partial dual} (or \emph{Morse petestroika}) of $G$ by the edge (or ribbon)~$i$.
\end{definition}
\begin{figure}[!h]
\center
\includegraphics{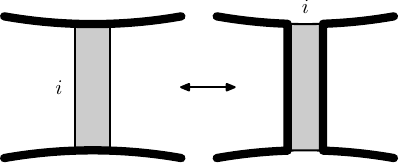}
\caption{Morse perestroika (or partial dual). Bold lines show the boundary of the vertices before and after taking the dual.}\label{f:Morse}
\end{figure}

We will denote the partial dual of $G$ with respect to an edge $i$ by $\mu_i(G)$. This notation is not standard: usually such a graph is denoted by $G^i$. The following lemma is straightforward (cf. also~\cite[Lemma 1.8 (a)--(c)]{Chmutov09}):

\begin{lemma} \begin{itemize}
\item $\mu_i$ is involutive: $\mu_i^2(G)=G$;

\item For $i\neq j$, the corresponding partial duals commute: $\mu_i\circ\mu_j(G)=\mu_j\circ\mu_i(G)$. Hence, an operation of \emph{partial dual with respect to a set of edges} $E'=\{i_1,\dots,i_k\}$ is well-defined: $\mu_{E'}(G)=\mu_{i_1}\circ\dots\circ\mu_{i_k}(G)$;

\item For arbitrary subsets $E',E''\subset E(G)$, we have $\mu_{E'}\circ\mu_{E''}(G)=\mu_{E'\triangle E''}(G)$, where $E'\triangle E''$ stands for the symmetric difference of $E'$ and $E''$.
              \end{itemize}
\end{lemma}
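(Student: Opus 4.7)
The plan is to run all three parts through the symmetric-difference formulation of the definition, which is essentially an algebraic reformulation that makes each claim fall out once we verify that the ``boundary'' of each ribbon is well defined independently of the order in which we perform the operations.

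For (i), I would argue as follows. By definition, $\mu_\alpha$ replaces the system of vertex boundaries $S$ by $S'=S\mathop{\Delta}\partial\alpha$, where $\partial\alpha$ is the full rectangular boundary of the ribbon $\alpha$, consisting of two ``attaching'' arcs (lying in $S$) and two ``side'' arcs (not lying in $S$). In $\mu_\alpha(G)$ the roles of these four arcs for $\alpha$ are swapped: the former side arcs are now attaching arcs (shared with the new vertices), and the former attaching arcs are now free. Applying $\mu_\alpha$ again takes the same four-arc set $\partial\alpha$ and symmetric-differences it back into $S'$, yielding $S''=S'\mathop{\Delta}\partial\alpha=S$. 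Since the ribbons themselves are never modified and the gluing instructions are determined by the vertex boundaries, we recover $G$.

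For (ii), the key observation is that for $\alpha\neq\beta$ the two rectangular boundaries $\partial\alpha$ and $\partial\beta$ are disjoint as subsets of the ribbon graph. Moreover, in the intermediate graph $\mu_\alpha(G)$, the attaching arcs of $\beta$ are the same four arcs as in $G$ (the operation $\mu_\alpha$ only rearranges the arcs making up $\partial\alpha$), so $\partial\beta$ is well defined as the same set of four arcs regardless of whether we have done $\mu_\alpha$ first. Therefore the resulting system of vertex boundaries is $S\mathop{\Delta}\partial\alpha\mathop{\Delta}\partial\beta$, and by commutativity and associativity of the symmetric difference this is the same whether we first apply $\mu_\alpha$ or $\mu_\beta$. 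Since the ribbons and their gluing data again agree, the two resulting ribbon graphs coincide.

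For (iii), part (ii) lets us write $\mu_{E'}\circ\mu_{E''}$ as the composition, in any convenient order, of all the individual $\mu_\alpha$ for $\alpha\in E'\cup E''$, with each $\alpha\in E'\cap E''$ appearing exactly twice. Using (ii) I would regroup so that the two copies of every such $\mu_\alpha$ are adjacent; then by (i) each such pair cancels. What remains is precisely the composition of $\mu_\alpha$ for $\alpha\in E'\mathop{\triangle}E''$, which is the definition of $\mu_{E'\mathop{\triangle}E''}$.

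I do not foresee any genuinely hard step: the proof is essentially the observation that the vertex-boundary system is updated by symmetric difference with a ribbon-dependent set of arcs, and the three claims are the corresponding algebraic properties of symmetric difference. The only place one needs to be careful is in (ii), namely checking that $\partial\beta$ really is the same four-arc subset after applying $\mu_\alpha$ — i.e., that the operation $\mu_\alpha$ does not reshuffle which arcs of $\beta$ count as its boundary. This is where the disjointness of $\partial\alpha$ and $\partial\beta$ for $\alpha\neq\beta$ is essential.
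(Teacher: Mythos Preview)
Your argument is correct. The paper does not actually prove this lemma: it simply declares it ``straightforward'' and refers the reader to \cite[Lemma~1.8(a)--(c)]{Chmutov09}, so there is no proof to compare against. Your symmetric-difference bookkeeping is exactly the natural way to make ``straightforward'' precise, and your care in (ii) about $\partial\beta$ being unaffected by $\mu_\alpha$ is the one point worth spelling out.
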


So the operation of partial dual defines the action of the group $\ZZ_2^m$ on the set of ribbon graphs with $m$ edges.

A key relation between the partial duals and Vassiliev moves is as follows.

\begin{proposition}\label{prop:conjugacy} First and second Vassiliev moves are conjugate by the partial dual with respect to the fixed chord:
\[
 v_2^{ij}(G)=\mu_i \circ v_1^{ij}\circ\mu_i (G).
\]
\end{proposition}

\begin{proof}
Let $i$ and $j$ be two neighboring edges. First suppose that they are not loops (i.e., have distinct endpoints). The proof is then shown on Fig.~\ref{fig:conjugacy}. The cases when either $i$ or $j$ (or both) are loops are considered in a similar way.
\begin{figure}[h!]
\center
\includegraphics[width=7cm]{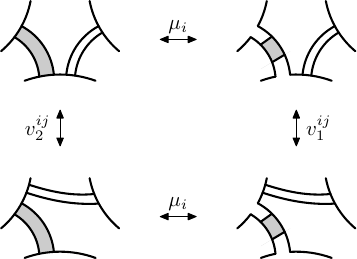}
\caption{Vassiliev moves are conjugate (the chord $i$ is shown by grey color, the chord $j$ is unfilled)}\label{fig:conjugacy}
\end{figure}
\end{proof}

\section{$L$-spaces of ribbon graphs}\label{sec:l-space}

In this section we introduce our fundamental object: the $L$-space of a ribbon graph. It is a generalization of the intersection graph of a chord diagram, introduced by S.~Chmutov, S.~Duzhin, and S.~Lando in \cite{ChmutovDuzhinLando94}. We start with recalling the latter notion.

\subsection{Intersection graph of a chord diagram}\label{ssec:IntGraph}
Let $c$ be a chord diagram with $n$ chords. Let us number these chords by $1,\dots,n$ in a certain way and consider a square matrix $A(c)$ of order $n$ with coefficients from $\FF_2$, defined as follows:
\begin{eqnarray*}
a_{ii}&=&0;\\
a_{ij}&=&\begin{cases} 1 &\text{if the chords $i$ and $j$ intersect;}\\
0 &\text{otherwise}.
\end{cases}
\end{eqnarray*}

Since the chords in a chord diagram are not numbered, the intersection matrix is also defined up to the  conjugation by a permutation matrix, i.e., up to a simultaneous permutation of rows and columns.

The intersection matrix of a chord diagram has a nice topological interpretation. The ribbon graph $G$ corresponding to a chord diagram with $n$ chords is a surface with boundary. The first homotopy group $H_1(G,\FF_2)$ of this surface is $n$-dimensional; it admits a natural basis formed by the chords (see~\cite[Sec~4.8.2]{CDM}). Then the intersection matrix for this basis in the homology coincides with the intersection matrix of the chord diagram, as defined above (see Fig.~\ref{fig:chord-base}, left).

 Equivalently, instead of the matrix we can consider the \emph{intersection graph} of a chord diagram. It is a graph whose vertices are indexed by the chords of a chord diagram, and each two vertices are connected by an edge iff the corresponding chords intersect each other.

The map from chord diagrams to their intersection graphs is obviously not injective: there exist different diagrams with the same graph. It is also non-surjective: Bouchet \cite{Bouchet} gives a criterion for a graph to be the intersection graph of a chord diagram. For instance, the simplest graph that does not correspond to any chord diagram is the ``5-wheel'' graph, shown on Fig.~\ref{f:5w2}.

\begin{figure}[!h]
\center
\includegraphics{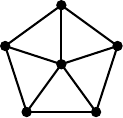}
\caption{``5-wheel'' graph}\label{f:5w2}
\end{figure}

However, the intersection graphs keep track of much information about chord diagrams (see \cite[4.8]{CDM} or \cite{LZ05}). Just like chord diagrams, their intersection graphs are subject of a four-term relation, they can be formed into a bialgebra and so forth; we postpone this discussion until Sec.~\ref{ssec:bialg_graphs}.

The notion of intersection matrices also makes sense for framed chord diagrams: if, as above, we define the intersection matrix of a diagram as the intersection matrix in the cohomology of the corresponding ribbon graph in the basis formed by the chords, its elements will admit the following simple description:
\begin{eqnarray*}
a_{ii}&=&\begin{cases}0 & \text{if the $i$-th chord is orientable;}\\
1 & \text{if the $i$-th chord is twisted;}
\end{cases}
\\
a_{ij}&=&\begin{cases} 1 &\text{if the chords $i$ and $j$ intersect;}\\
0 &\text{otherwise}.
\end{cases}
\end{eqnarray*}
(the self-intersection index of a twisted chord is 1). Likewise, we can define \emph{framed intersection graphs} of framed chord diagrams, with vertices of two different colors, corresponding to orientable and twisted chords. However, the intersection matrix (or graph) does not admit a direct generalization for the case of arbitrary ribbon graphs (or, what is the same, of multicomponent framed chord diagrams). Our next goal is to present a (what we think is a) reasonable
analogue of this notion.

\subsection{Definition of $L$-space}\label{ssec:def_lspace}

Let $G$ be a connected ribbon graph with $n$ chords (for the moment, we will suppose for simplicity that these chords 
are numbered). Consider a $2n$-dimensional symplectic space $V=\FF_2^{2n}$ with the base $e_1,\dots,e_n,f_1,\dots,f_n$, and the intersection form defined by $(e_i,f_j)=\delta_{ij}$ (and vanishing on any other couple of base vectors). Our goal is to associate to the graph $G$ a Lagrangian (i.e. maximal isotropic) subspace $L(G)\subset V$.

To do this, first consider the surface $\hG$ obtained from $G$ by removing an open disk from each of its vertices. We will call this surface \emph{punctured ribbon graph}; this is a compact surface with boundary. Next, for each edge $E_i$ of $G$ consider two paths: let $\alpha_i$ connect the opposite sides of the edge $E_i$ and let $\beta_i$ connect along $E_i$ the boundaries of the punctures in the vertices adjacent to $E_i$, as shown on Figure~\ref{fig:edge_cycles}.
\begin{figure}[h!]
\center
\includegraphics{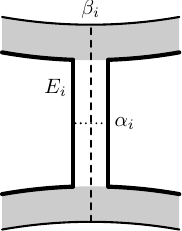}
\caption{Two cycles corresponding to an edge; here $\partial G$ is shown by bold lines, the boundary of punctures $\partial \widehat{G}\setminus \partial G$ by thin ones, (punctured) vertices are filled with grey color, the edge $E_i$ is unfilled.}\label{fig:edge_cycles}
\end{figure}

The paths $\alpha_i$ and $\beta_i$ define classes in the first relative homology group of $\widehat{G}$ modulo its boundary: $[\alpha_i],[\beta_i]\in H_1(\widehat{G}, \partial \widehat{G};\FF_2)$ (we suppose that all homology groups have coefficients in $\FF_2$).

Now, the intersection index defines a natural $\FF_2$-valued pairing between the relative homology group $H_1(\widehat{G}, \partial \widehat{G};\FF_2)$ and the first homology group $H_1(\widehat{G};\FF_2)$. Consider the intersection indices of a cycle $\gamma\in H_1(\widehat{G};\FF_2)$ with $[\alpha_i]$ and $[\beta_i]$. Roughly speaking, the intersection index with $[\alpha_i]$ tells us whether $\gamma$ ``crosses'' $E_i$ from one shore to the other, while the intersection index with $[\beta_i]$ tells us whether $\gamma$ goes along the boundary of the vertex from the left to the right near one of the endpoints of~$E_i$.

Let us group all these intersection indices together, defining a map
\begin{equation}\label{eq:map}
\varphi_G:H_1(\widehat{G};\FF_2)\to V, \quad \varphi_G(\gamma)=\sum_i (\gamma\cap [\alpha_i]) \cdot e_i + \sum_i (\gamma\cap [\beta_i]) \cdot f_i.
\end{equation}

This provides us with the following

\begin{definition}\label{def:lspace}
\emph{L-space}, associated to the ribbon graph $G$, is defined as the image of the map $\varphi_G$:
$$
L(G):=\Im \varphi_G \subset V.
$$
\end{definition}

One can immediately note that $L(G)$ is at most $n$-dimensional. Indeed, $\dim H_1(\widehat{G};\FF_2)=n+1$, as $\widehat{G}$ is homotopy equivalent to the wedge of $n+1$ circle. However, the ``outer'' boundary $\partial G\in H_1(\widehat{G};\FF_2)$ does not intersect any $\alpha_i$, and intersects any $\beta_i$ twice; hence, $\varphi_G(\partial G)=0$. 

The following example motivates this definition as a generalization of the notion of the intersection matrix:

\begin{example}\label{ex:int}
Let $G$ be a ribbon graph, corresponding to a chord diagram~$c$ with $n$ chords. Then, $L(G)\subset V$ in an $n$-dimensional space, generated by the rows of the matrix
\begin{equation}\label{eq:rows}
M=(\Id_n \mid	 A(c))
\end{equation}
where $A(c)$ is the intersection matrix of the chord diagram.
\end{example}

Indeed, the $i$-th row corresponds to the $i$-th chord: closing it ``along the boundary circle'', we get a cycle $\gamma_i\in H_1(\hG;\FF_2)$ that intersects $\alpha_i$ and that has the intersection index with $\beta_j$ equal to 1 if and only if $i$-th and $j$-th chords intersect (see Fig.~\ref{fig:chord-base}, right).
\begin{figure}[h!]
\center
\includegraphics{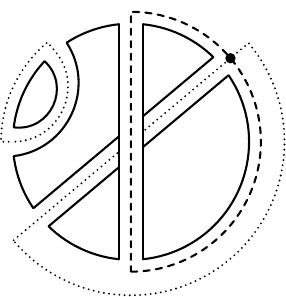} \qquad
\includegraphics{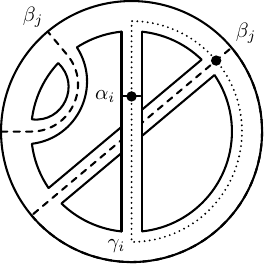}
\caption{Left: two cycles $\gamma_i$ and $\gamma_j$ intersect if and only if the corresponding chords intersect. 
Right: cycle $\gamma_i$ and its intersections with $\alpha_j$, $\beta_j$.}\label{fig:chord-base}
\end{figure}

\begin{remark}\label{r:chord-intersection}
Conclusion of Example~\ref{ex:int} still holds if one passes to the framed chord diagrams.
\end{remark}

In the beginning of this section, we have claimed that the space $L(G)$ will be 
Lagrangian. Let us show that it is indeed the case.
\begin{theorem}\label{thm:Lagrange}
Let $G$ be a ribbon graph. Then, $L(G)$ is a Lagrangian (i.e. maximal isotropic) subspace of~$V$.
\end{theorem}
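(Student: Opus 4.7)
The Lagrangian property amounts to (a) isotropy of $L(G)$ and (b) $\dim L(G)=n$. I would verify them in turn.

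For (b), I exhibit $n$ explicit linearly independent vectors in $L(G)$. Fix a spanning tree $T$ of $G$ (assume $G$ connected, otherwise argue componentwise); let $V(G)$ denote the vertex set. For each of the $n-|V(G)|+1$ co-tree edges $E_i$, the spine of $\widehat{G}$ contains a cycle $\tilde\gamma_i$ running through $E_i$ together with the tree path reconnecting its endpoints; an inspection of intersections shows that $\varphi_G(\tilde\gamma_i)$ has $e_i$-coefficient equal to $1$. For each vertex $v\in V(G)$ the puncture loop $C_v$ gives $\varphi_G(C_v)=\sum_{E_j\text{ non-loop at }v} f_j$, lying in $\mathrm{span}(f_j)$. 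The $\tilde\gamma_i$'s have distinct $e_i$ leading terms and thus give $n-|V(G)|+1$ independent vectors, while the $C_v$'s span a $(|V(G)|-1)$-dimensional subspace of $\mathrm{span}(f_j)$ (the only relation being $\sum_v \varphi_G(C_v)=0$, since each non-loop edge is counted at both endpoints). Together these are $n$ independent vectors, so $\dim L(G)\ge n$.

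For (a), set $B(\gamma,\delta):=(\varphi_G(\gamma),\varphi_G(\delta))$, so that
\[B(\gamma,\delta)=\sum_{i=1}^n\bigl[(\gamma\cdot[\alpha_i])(\delta\cdot[\beta_i])+(\gamma\cdot[\beta_i])(\delta\cdot[\alpha_i])\bigr].\]
I check $B\equiv 0$ on the basis above. The case $B(C_v,C_w)=0$ is immediate since both images lie in $\mathrm{span}(f_j)$, on which the symplectic form vanishes. The case $B(\tilde\gamma_i,C_v)=0$ reduces to the graph-theoretic fact that the closed walk $\pi_i$ (the cycle $E_i$ together with its tree path) has an even number of non-loop edge-incidences at every vertex. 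The decisive case $B(\tilde\gamma_i,\tilde\gamma_j)=0$ requires computing the $f$-coefficients $c_{ij}:=\tilde\gamma_i\cdot[\beta_j]$ by tracing how $\tilde\gamma_i$ crosses $\beta_j$ inside the vertex annuli it visits, after which a bookkeeping using the cyclic order of half-edges at the common vertices of $\pi_i$ and $\pi_j$ forces $B(\tilde\gamma_i,\tilde\gamma_j)=0$.

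Combined, isotropy forces $\dim L(G)\le n$, which together with (b) gives $\dim L(G)=n$ and $L(G)$ Lagrangian. The main obstacle is the third isotropy case $B(\tilde\gamma_i,\tilde\gamma_j)=0$, which is genuinely combinatorial and depends sensitively on the cyclic orders and edge-twists of the ribbon graph. An appealing alternative would be to reduce to the chord-diagram case (where isotropy is immediate from symmetry of the intersection matrix, cf.~Example~\ref{ex:int} and Remark~\ref{r:chord-intersection}) and extend by induction on the number of vertices, but this requires an operation reducing the vertex count---which is not directly provided by partial duals or Vassiliev moves.
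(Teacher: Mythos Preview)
Your dimension count in (b) is sound: the $\tilde\gamma_i$ are independent by their distinct co-tree $e$-coordinates, and the puncture loops supply the remaining $|V(G)|-1$ dimensions inside $F_{\mN}$.

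The gap is exactly where you flag it: the verification of $B(\tilde\gamma_i,\tilde\gamma_j)=0$ is not carried out, only promised as ``bookkeeping using the cyclic order of half-edges''. This is the entire content of isotropy---the other two cases are soft---and the combinatorics here is genuinely delicate, since the $f$-coefficients $c_{ij}$ depend on how the tree path of $\tilde\gamma_i$ threads through each vertex annulus relative to $\beta_j$, which in turn depends on both the cyclic orders and the twists. Without this computation the proposal is incomplete.

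The paper avoids this case analysis altogether. It contracts each ribbon $E_i$ to a point, obtaining a $4$-valent graph $\Gamma$ with one vertex $P_i$ per edge of $G$, and observes that the $i$th summand $(\varphi_{\Gamma,i}(\gamma),\varphi_{\Gamma,i}(\gamma'))$ of the symplectic pairing equals $1$ precisely when $P_i$ has degree $3$ in the subgraph $\gamma\cup\gamma'$. The total pairing is then the number of odd-degree vertices of $\gamma\cup\gamma'$, which is even. This handles all pairs $\gamma,\gamma'\in H_1(\Gamma;\FF_2)$ uniformly and never requires choosing a spanning tree or tracking individual intersections. The same $4$-valent model also gives $\dim\Ker\varphi_G=1$ directly, yielding $\dim L(G)=n$ without your basis construction.

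One correction to your closing remark: partial duals \emph{do} change the vertex count---taking $\mu_\alpha$ for a non-loop edge $\alpha$ merges its two endpoints into a single vertex (see the construction in Section~\ref{ssec:partialdual}). So your alternative route is in fact viable: reduce an arbitrary connected ribbon graph to a one-vertex one by successive partial duals along the edges of a spanning tree, verify directly that $L(\mu_\alpha(G))=M_\alpha(L(G))$ with $M_\alpha$ a symplectomorphism (this is the first bullet of Theorem~\ref{thm:lspaceformulas}, and its proof does not rely on the Lagrangian property), and invoke Remark~\ref{r:chord-intersection} for the base case.
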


Before passing to its proof in full generality, we make one simple remark in the simplest particular case: 
\begin{remark}
For the particular case of a one-vertex ribbon graph, corresponding to a framed chord diagram~$c$, 
the pairing between the $i$-th and $j$-th rows $r_i,r_j\in V$ of the matrix~$M$ given 
in~\eqref{eq:rows} vanishes for all $i$, $j$. Indeed, it is equal to the 
$$
(r_i, r_j) = a_{ij}+a_{ji} =0\in \FF_2,
$$
where $a_{ij}$ are the elements of the intersection matrix~$A(c)$, and the 
sum $a_{ij}+a_{ji}$ vanishes in $\FF_2$, as this matrix is symmetric.
\end{remark}

\begin{proof}[Proof of Theorem~\ref{thm:Lagrange}]
Let us start with an alternative description of the space $L(G)$. Namely, the surface $\hG$ is 
homotopy equivalent to a $4$-valent graph $\Gamma$ on $n$ vertices, obtained from $G$ by 
contracting each ribbon~$E_i$ to the corresponding vertex~$v_i$ (and the rest to its edges); see Fig.~\ref{fig:4-graph}.

\begin{figure}[h!]
\center
\includegraphics{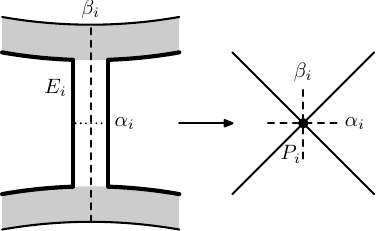}
\caption{Contracting a ribbon graph to a 4-graph}\label{fig:4-graph}
\end{figure}

Then the four half-edges, starting from a vertex $v_i$, correspond to the four vertices of the corresponding ribbon~$E_i$ 
(considered as a rectangle). Finally, these half-edges are joined if the corresponding vertices of the ribbons are joined by a 
part of the boundary of a vertex in the ribbon graph.

The homotopy equivalence gives us an identification between $H_1(\hG;\FF_2)$ and $H_1(\Gamma;\FF_2)$. Now, 
for each ribbon $E_i$ the corresponding summand 
$$
\varphi_{G,i}(\gamma):=(\gamma\cap [\alpha_i]) \cdot e_i + \sum_i (\gamma\cap [\beta_i]) \cdot f_i
$$
in~\eqref{eq:map} under this identification corresponds to a map 
$$
\varphi_{\Gamma,i}:H_1(\Gamma;\FF_2)\to V,
$$
that counts the ``crossing'' by a cycle $\gamma\in H_1(\Gamma;\FF_2)$ of the associated~$\alpha_i$ and~$\beta_i$ (see Fig.~\ref{fig:4-graph}, right).

\begin{figure}[h!]
\center
\includegraphics{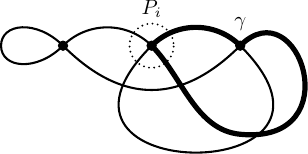}
\caption{The graph $\Gamma$, corresponding to the ribbon graph on the right of Fig.~\ref{fig:chord-base}, the cycle $\gamma$, corresponding to the dashed cycle there, and the neighborhood of the point $P_i$, used to define the restriction map~$R_i$.}\label{fig:restriction}
\end{figure}

For any $i$, denote by $X_i$ the set of half-edges of~$\Gamma$, adjacent to the vertex $P_i$. Let $R_i:H_1(\Gamma;\FF_2)\to 2^{X_i}$ be the restriction map, sending each cycle $\gamma$ to the set of half-edges, adjacent to $P_i$, that it includes (in particular, $R_i$ takes values only in subsets of $X_i$ of even cardinality); {see Fig.~\ref{fig:restriction}}. {Note that as $\Gamma$ is a graph, the cycle~$\gamma$ viewed as a union of edges is well-defined given~$\gamma$ as an element of the homology group}. It is then straightforward to check the following.
\begin{lemma}\label{l:phi-gamma-i}
\begin{enumerate}
\item For any $\gamma\in H_1(\Gamma;\FF_2)$, if $R_i(\gamma)$ consists of zero or four half-edges, one has $\varphi_{\Gamma,i}(\gamma)=0$. Otherwise, $\varphi_{\Gamma,i}(\gamma)\in \{e_i,f_i,e_i+f_i\}$.
\item\label{i:gg} For any $\gamma,\gamma'\in H_1(\Gamma;\FF_2)$, one has $\varphi_{\Gamma,i}(\gamma)=\varphi_{\Gamma,i}(\gamma')\neq 0$ if and only if $R_i(\gamma)$ and $R_i(\gamma')$ consist of two half-edges each, and either coincide, or are complementary in~$X_i$.
\end{enumerate}
\end{lemma}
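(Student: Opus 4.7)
The plan is to reduce both parts to a purely local case analysis at the vertex $P_i$ of $\Gamma$. First I would represent $[\alpha_i]$ and $[\beta_i]$ by two transverse arcs inside the rectangle $E_i$ that intersect each other exactly once: $\alpha_i$ joins the two long sides of $E_i$ (and so separates its two short sides), while $\beta_i$ runs along the ribbon from one short-side boundary puncture to the other (and so separates its two long sides). Under the identification of $X_i$ with the four corners of this rectangle, the value $\varphi_{\Gamma,i}(\gamma)$ depends only on the set $R_i(\gamma)\subset X_i$ together with any chosen pairing of its elements into arcs drawn inside $E_i$, since $\alpha_i$ and $\beta_i$ are entirely supported in $E_i$ and all intersections with $\gamma$ localize there.

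For part~(i), I would enumerate cases according to $|R_i(\gamma)|\in\{0,2,4\}$. The case $|R_i(\gamma)|=0$ is immediate. For $|R_i(\gamma)|=2$, the six two-element subsets of corners split into three types: the two subsets lying on a common short side each give one crossing with $\beta_i$ and none with $\alpha_i$, contributing $f_i$; the two subsets on a common long side give one crossing with $\alpha_i$ and none with $\beta_i$, contributing $e_i$; and the two diagonal subsets cross both arcs once, contributing $e_i+f_i$. For $|R_i(\gamma)|=4$, any of the three possible pairings of the four corners into interior arcs produces an even number of crossings with each of $\alpha_i$ and $\beta_i$, so $\varphi_{\Gamma,i}(\gamma)=0$; this simultaneously shows that the value vanishes and that it does not depend on how $\gamma$ is routed through $P_i$.

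Part~(ii) then reads off directly from the enumeration above: each of the three nonzero values $e_i$, $f_i$, $e_i+f_i$ is realized by exactly two two-element subsets of $X_i$, and these two subsets are complementary in $X_i$. The only step requiring a bit of care, and the main potential obstacle, is to justify that the homotopy equivalence $\widehat{G}\simeq\Gamma$ transports the intersection pairings with $[\alpha_i]$ and $[\beta_i]$ to the parity counts performed above; this is transparent once one notes that $\alpha_i$ and $\beta_i$ live entirely inside the single ribbon $E_i$, so their pairing with any cycle $\gamma$ reduces to the local computation made.
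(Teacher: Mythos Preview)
Your proposal is correct and is precisely the ``straightforward check'' the paper alludes to but does not spell out: the paper states Lemma~\ref{l:phi-gamma-i} without proof, and your local case analysis at the rectangle $E_i$---enumerating the $\binom{4}{2}=6$ two-element subsets of corners and verifying that complementary pairs yield the same nonzero value among $e_i,f_i,e_i+f_i$, while the three four-element pairings all give even crossing counts---is exactly what is needed. Your remark that well-definedness of $\varphi_{\Gamma,i}$ on $H_1(\Gamma;\FF_2)$ requires checking independence of the routing in the $|R_i(\gamma)|=4$ case is a good observation, and your verification of it is sound.
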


Applying Lemma~\ref{l:phi-gamma-i}, we get that $\dim \Ker \varphi_{\Gamma}=1$, where $\varphi_{\Gamma}=\sum_i \varphi_{\Gamma,i}$ is the map that corresponds to $\varphi_{G}$ under the identification between $H_1(\hG;\FF_2)$ and $H_1(\Gamma;\FF_2)$. Indeed, if for $\gamma\in H_1(\Gamma;\FF_2)$ one has $\varphi_{\Gamma}(\gamma)=0$, then for any vertex $P_i$ one has $\varphi_{\Gamma,i}(\gamma)=0$, and hence $\gamma$ visits any vertex $P_i$ either by zero, or by all the four adjacent half-edges. An induction argument (together with the connectedness of $\Gamma$) then implies that either $\gamma=0$ or $\gamma=[\Gamma]$. 
As $\dim\Ker \varphi_{\Gamma}=1$, we have $\dim L(G)=\dim \Im \varphi_{\Gamma}=(n+1)-1=n$.

Finally, let us check that $L(G)=\Im \varphi_{\Gamma}$ is a Lagrangian subspace. Indeed, for any two $\gamma,\gamma'\in H_1(\Gamma;\FF_2)$ we have 
$$
(\varphi_{\Gamma}(\gamma),\varphi_{\Gamma}(\gamma')) = \sum_i (\varphi_{\Gamma,i}(\gamma),\varphi_{\Gamma,i}(\gamma')).
$$ 
Now, it is straightforward to check that conclusion~(\ref{i:gg}) in Lemma~\ref{l:phi-gamma-i} can be reformulated in the following way: $(\varphi_{\Gamma,i}(\gamma),\varphi_{\Gamma,i}(\gamma'))=1$ if and only if $R_i(\gamma)\cup R_i(\gamma')$ consists of exactly three half-edges. Indeed, the skew-product of any two different vectors in $\{e_i,f_i,e_i+f_i\}$ is equal to~$1$, and the union of any two different and non-complementary two-element subsets of $X_i$ consists of three elements.

Consider now $\gamma\cup \gamma'$ as a subgraph of~$\Gamma$. Any vertex $P_i$ is its vertex of degree $0$, $2$, $3$ or~$4$. We thus have
\begin{multline*}
(\varphi_{\Gamma}(\gamma),\varphi_{\Gamma}(\gamma')) = \sum_i \begin{cases} 1 & P_i \text{ is of degree } 3 \text{ in } \gamma\cup\gamma' \\ 
0 & \text{otherwise} \end{cases} 
\\
= \# \{P_i \mid P_i \text{ is of odd degree in } \gamma\cup\gamma'\} =0 \quad \text{in } \FF_2,
\end{multline*}
as the number of vertices of odd degree in any graph is even. 
\end{proof}

We conclude the section with the following question, due to E.~Ghys. It is a standard statement that for a compact 3-manifold $M$ with boundary, the kernel of the homomorphism $H_1(\partial M)\to H_1(M)$ is a Lagrangian subspace.
\begin{question}[E.~Ghys]
Is there any relation between Lagrangian subspaces defined by compact $3$-manifolds with boundary and $L$-spaces defined by knots? Can one define the notion of an $L$-space using some naturally constructed $3$-manifold?
\end{question}

\subsection{Action on the $L$-spaces of Vassiliev moves and Morse perestroikas} \label{ssec:vassiliev}

In this subsection we see how do Vassiliev moves on ribbon graphs and partial duals, defined in Subsection~\ref{ssec:partialdual}, act on the corresponding $L$-spaces. It turns out that they are nice base changes. Namely, we have the following
\begin{theorem}\label{thm:lspaceformulas}
Let $G$ be a ribbon graph. Then,
\begin{itemize}
\item For any chord $i$, we have $L(\mu_i(G))=M_i(L(G))$, where $M_i:V\to V$ interchanges $e_i$ and $f_i$ and fixes all the other elements of the standard base of~$V$.
\item For any two chords $i,j$ we have $L(v_1^{ij}(G))=T_1^{ij}(L(G))$, where 
$$
T_1^{ij}(f_i)=f_i+e_j,  \quad T_1^{ij}(f_j)=f_j+e_i, \quad T_1^{ij}(e_i)=e_i, \quad T_1^{ij}(e_j)=e_j,
$$
and $T_1^{ij}$ fixes all the other standard base elements of~$V$.
\item For any two chords $i,j$ we have $L(v_2^{ij}(G))=T_2^{ij}(L(G))$, where 
$$
T_2^{ij}(e_i)=e_i+e_j,  \quad T_2^{ij}(f_j)=f_j+f_i, \quad T_2^{ij}(f_i)=f_i, \quad T_2^{ij}(e_j)=e_j,
$$
and $T_2^{ij}$ fixes all the other standard base elements of~$V$.
\end{itemize}
\end{theorem}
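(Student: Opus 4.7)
The plan is to verify the three formulas in order, reducing the third to the first two by the conjugation relation of Proposition~\ref{prop:conjugacy}. For the first two, the approach is to track how the relative homology classes of the cycles $\alpha_k, \beta_k$ change under the operation and read off the transformation on $L(G)$ from the bilinearity of the intersection pairing used in the definition of $\varphi_G$.

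For the Morse perestroika $\mu_i$, the key observation is that the operation modifies $G$ only in a neighborhood of the ribbon $E_i$ and leaves the rectangle $E_i$ itself intact --- only the sides along which it attaches to the vertex structure change. For $k \neq i$ one can choose representatives of $\alpha_k, \beta_k$ disjoint from $E_i$, so they survive unchanged in $\widehat{\mu_i(G)}$ and contribute nothing new to the corresponding coordinates. For the chord $i$ itself, the paths $\alpha_i$ (crossing $E_i$ between its long sides) and $\beta_i$ (running along $E_i$ between the two adjacent puncture boundaries) exchange roles: in $\mu_i(G)$ the old long sides have become short sides, so the former $\alpha_i$ is now a path between two puncture boundaries going along a ribbon, i.e., the new $\beta_i'$; likewise $\beta_i$ becomes $\alpha_i'$. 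Computing intersection indices then swaps the coefficients of $e_i$ and $f_i$, giving $L(\mu_i(G)) = M_i(L(G))$.

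For the first Vassiliev move $v_1^{ij}$, which again only alters $G$ near the common vertex $v$, the same reasoning shows that $\alpha_k, \beta_k$ for $k \neq i,j$ as well as the ribbon-internal paths $\alpha_i, \alpha_j$ contribute unchanged coordinates. The paths $\beta_i$ and $\beta_j$, however, depend on where $E_i$ and $E_j$ attach at $v$: swapping the two attachments shifts the endpoint of $\beta_i$ on the puncture boundary of $v$ by exactly one short arc, and this arc crosses the ribbon $E_j$ once. Hence in $H_1(\widehat{G}, \partial \widehat{G}; \FF_2)$ one obtains $\beta_i' = \beta_i + \alpha_j$, and symmetrically $\beta_j' = \beta_j + \alpha_i$. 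Bilinearity of the intersection pairing then produces exactly the basis change $T_1^{ij}$.

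Finally, the formula for $v_2^{ij}$ follows immediately by combining the first two: by Proposition~\ref{prop:conjugacy}, $L(v_2^{ij}(G)) = M_i \circ T_1^{ij} \circ M_i \,(L(G))$, and a direct check on the four basis vectors $e_i, f_i, e_j, f_j$ confirms $M_i \circ T_1^{ij} \circ M_i = T_2^{ij}$ (for instance $M_i T_1^{ij} M_i(e_i) = M_i T_1^{ij}(f_i) = M_i(f_i + e_j) = e_i + e_j$, matching $T_2^{ij}(e_i) = e_i + e_j$). The main technical obstacle I anticipate is making the cycle identifications above rigorous when the operation changes the topology of $\widehat{G}$, which can happen for $\mu_i$. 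A clean workaround is to rephrase everything in terms of the $4$-valent graph $\Gamma$ from the proof of Theorem~\ref{thm:Lagrange}: its combinatorial data (vertex set indexed by chords with four labeled half-edges) is preserved by all three operations, and each of $\mu_i, v_1^{ij}, v_2^{ij}$ acts on it by a completely local modification whose effect on the summands $\varphi_{\Gamma,k}$ can be read off directly from Lemma~\ref{l:phi-gamma-i}.
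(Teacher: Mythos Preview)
Your proposal is correct and follows essentially the same route as the paper: first handle $\mu_i$ by observing that $\alpha_i$ and $\beta_i$ swap while all other $\alpha_k,\beta_k$ are untouched, then handle $v_1^{ij}$ by the relation $[\beta_i']=[\beta_i]+[\alpha_j]$ (and symmetrically), and finally deduce $v_2^{ij}$ from Proposition~\ref{prop:conjugacy} via $T_2^{ij}=M_i T_1^{ij} M_i$. The only cosmetic difference is that the paper phrases the $v_1$ computation in terms of intersection numbers of a fixed absolute cycle $\gamma$ with the new relative classes, whereas you phrase it dually as a transformation of the relative classes themselves; your anticipated workaround via the $4$-valent graph $\Gamma$ is precisely what the paper invokes for the $\mu_i$ case.
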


Note, that a priori we cannot say that such maps $T_1^{ij}$, $T_2^{ij}$ exist: in the same way as two different chord diagrams may have the same intersection matrix, two different ribbon graphs may have the same $L$-space, and it is not evident that after a Vassiliev move their $L$-spaces will still coincide.

\begin{proof}
We start with the Morse perestroika (or partial dual) operation. Namely, note that for two ribbon graphs, $G$ and $G'=\mu_i(G)$, there is a natural identification between $H_1(\hG;\FF_2)$ and $H_1(\widehat{G'};\FF_2)$. Moreover, the corresponding 4-valent graphs, introduced in the proof of Theorem~\ref{thm:Lagrange}, almost coincide; the only difference between them is that the dashed lines $\alpha_i$ and $\beta_i$  are interchanged (while all the other $\alpha_j$, $\beta_j$ stay unchanged). Thus, for any $\gamma\in H_1(\hG;\FF_2)$ its images (after the identification mentioned above) under $\varphi_{\widehat{G}}$ and $\varphi_{\widehat{G'}}$ differ by the application of a linear map $M_i:V\to V$ that interchanges $e_i$ and $f_i$ and fixes all the other base elements $e_j$ and $f_j$. But this immediately implies that 
$$
L(G')=\Im \varphi_{G'} = \Im M_i\circ \varphi_G = M_i(\Im \varphi_G)= M_i(L(G)).
$$

Next, consider the first Vassiliev move $v_1^{i j}$ for the $i$-th and the $j$-th chord. Again, there is a natural identification between $H_1(\widehat G,\FF_2)$ and $H_1(\widehat G',\FF_2)$, where $G'=v_1^{ij}(G)$. Indeed, for any $\gamma\in H_1(\widehat G,\FF_2)$ we do not change it outside the area where the Vassiliev move is applied (Fig.~\ref{fig:ribbon_moves}, left), and we let it ``go along the same ribbons'' (and consequently close up) inside it. Then, it is easy to see that for any $\gamma\in H_1(G,\FF_2)$ for the corresponding $\gamma'$ almost all the corresponding intersections stay unchanged:
\begin{equation}\label{eq:1-unchanged}
\forall k\neq i,j \quad [\alpha_k]\cap \gamma=[\alpha_k']\cap \gamma', \quad [\beta_k]\cap \gamma=[\beta_k '] \cap \gamma'.
\end{equation}
Moreover, the intersections with $\alpha_i, \alpha_j$ also stay unchanged (the cycles pass through the same ribbons). So, the only difference will be with the intersection with $\beta_i$, $\beta_j$. For such intersections, one has 
\begin{equation}\label{eq:beta-i}
\gamma' \cap [\beta_i'] = \gamma \cap [\beta_i] + \gamma\cap [\alpha_j],
\end{equation}
\begin{equation}\label{eq:beta-j}
\gamma' \cap [\beta_j'] = \gamma \cap [\beta_j] + \gamma\cap [\alpha_i],
\end{equation}
as interchanging of the endpoints of $i$th and $j$th ribbon has changed the intersection index with $[\beta_i]$ for cycles that pass along $j$th ribbon, and with  $[\beta_j]$ for cycles that pass along $i$th ribbon. Substituting~\eqref{eq:1-unchanged}--\eqref{eq:beta-j} in~\eqref{eq:map}, we see that 
$$
\varphi_G(\gamma)=T_1^{ij} (\varphi_{G'}(\gamma')),
$$
where $T_1^{ij}$ fixes all the basis elements of $V$, except for $f_i$ and $f_j$, and 
$$
T_1^{ij}(f_i)=f_i+e_j,  \quad T_1^{ij}(f_j)=f_j+e_i.
$$
We thus get the desired equality
$$
L(G') = T_1^{ij} (L(G)). 
$$

Finally, the last conclusion is implied by the first and the second one: from Proposition~\ref{prop:conjugacy} we know that $v_2^{ij} = \mu_i v_1^{ij} \mu_i$, and we have 
$$
T_2^{ij} = M_i T_1^{ij} M_i.
$$
\end{proof}

\section{Bialgebra structures}\label{sec:bialgebra}

A well-known feature of chord diagrams is that they generate a bialgebra. In this section we first recall this construction and then provide its generalizations: the \emph{graph bialgebra}, due to Lando~\cite{Lando00}, and the \emph{bialgebra of $L$-spaces}, which is the main construction of this paper. 

\subsection{The bialgebra of chord diagrams}\label{ssec:chord}

Let $\KK$ be a field (or, even more generally, a commutative and associative ring). For $n>0$, let $\cA_n$ be a $\KK$-vector space formally spanned by all chord diagrams with $n$ chords; we set $\cA_0=\KK$ and $\cA=\bigoplus_{n\geq 0}\cA_n$.

For any chord diagram $c$ and any pair of neighboring chords defining Vassiliev moves, the alternating sum
\[
 c-v_1(c)-v_2(c)+v_1v_2(c)
\]
is called a \emph{four-term element}.

Let $\cA^{(4)}$ be the subspace of $\cA$ generated by all four-term elements. Denote by $\cM$ the quotient space: $\cM=\cA/\cA^{(4)}$. This graded vector space can be turned into a graded bialgebra by introducing the operations of multiplication and comultiplication.

To define the comultiplication on $\cM$, let us first define it on $\cA$. Take a chord diagram $c$ with $n$ chords. Let $V(c)$ be the set of its chords; for a subset $I\subset V(c)$ denote by $c_I$ the diagram formed by all chords from the set $I$. Then the comultiplication $\overline\Delta\colon \cA\to\cA\otimes\cA$ takes $c$ into
\[
 \overline\Delta(c)=\sum_{I\sqcup I' = V(c)} c_I\otimes c_{I'}.
\]
A routine check shows that $\overline\Delta(\cA^{(4)})\subset \cA^{(4)}\otimes \cA + \cA\otimes \cA^{(4)}$, so this operation determines a comultiplication $\Delta\colon\cM\to\cM\otimes\cM$ on the quotient space.

Multiplication of two chord diagrams is defined as their ``connected sum''. To multiply two chord diagrams $c_1$ and $c_2$, we make a puncture in each of the two circles and attach these two circles one to another along this puncture, obtaining a new diagram $c_1\# c_2$. Clearly, this operation is not well-defined, since the result depends on the positions of the punctures. However, it is not hard to show that all such chord diagrams are congruent \emph{modulo the four-term relations}. This allows us to define the product on $\cM$ as follows: $c_1\cdot c_2=c_1\# c_2\mod\cA^{(4)}$. Clearly, it is commutative.

One can also show that these two operations satisfy the axioms of a bialgebra, thus obtaining the following theorem (cf.~\cite[Theorem~6.1.12]{LZ05}).
\begin{theorem}
 The vector space of chord diagrams $\cM$ is a graded commutative and cocommutative bialgebra over $\KK$ with respect to the operations introduced above.
\end{theorem}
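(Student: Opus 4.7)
The plan is to verify the bialgebra axioms for $\cM$ one step at a time. The grading is immediate: the comultiplication sends $\cA_n$ into $\bigoplus_{p+q=n}\cA_p\otimes \cA_q$ since $|I|+|I'|=n$, and the connected sum of two diagrams of degrees $p$ and $q$ produces a diagram of degree $p+q$. Cocommutativity follows directly from the manifest symmetry of the formula
\[
\overline\mu(c)=\sum_{I\sqcup I'=V(c)} c_I\otimes c_{I'}
\]
under swapping $I$ and $I'$.

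First I would show that the multiplication descends to $\cM$ and is commutative. The connected sum $c_1\# c_2$ depends on the puncture positions on the two circles, so the task is to prove that moving a puncture past a single chord endpoint changes $c_1\# c_2$ by an element of $\cA^{(4)}$. Fix the puncture on $c_2$ and move the one on $c_1$ past an endpoint of a chord $\alpha$ of $c_1$; let $\beta$ be the chord of $c_2$ neighboring the gluing locus. A direct inspection of the cyclic order of endpoints around the glued circle identifies the four diagrams obtained before/after the puncture move and after applying the two Vassiliev moves on $(\alpha,\beta)$ with exactly the four terms of a $4$-term element. Thus the product is well-defined on~$\cM$. Commutativity follows because $c_1\# c_2$ and $c_2\# c_1$ differ only by the position of the single common puncture on one glued circle, so they are equal modulo $\cA^{(4)}$ by the argument just given.

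Next I would verify that $\overline\mu(\cA^{(4)})\subset \cA^{(4)}\otimes\cA+\cA\otimes\cA^{(4)}$ so that the comultiplication descends. Pick a $4$-term element in $\cA$ built from neighboring chords $\alpha,\beta$ of a diagram $c$. For each splitting $V(c)=I\sqcup I'$ the pair $(\alpha,\beta)$ falls into one of three cases: both chords lie in $I$, both lie in $I'$, or they are separated. In the first two cases the Vassiliev moves act only in the factor containing both chords, and the corresponding partial sum over such splittings is a $4$-term element in one factor tensored with an arbitrary diagram in the other. In the separated case, a Vassiliev move on $(\alpha,\beta)$ does not alter $c_I$ or $c_{I'}$ at all (each of these subdiagrams contains at most one of the two chords, so nothing is moved), and the four terms cancel in pairs with the signs $+,-,-,+$.

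Finally, the bialgebra compatibility $\overline\mu(c_1\# c_2)=\overline\mu(c_1)\cdot\overline\mu(c_2)$ follows from the natural bijection between subsets $I\subset V(c_1\# c_2)=V(c_1)\sqcup V(c_2)$ and pairs $(I_1,I_2)$ with $I_j\subset V(c_j)$, together with the identification $(c_1\# c_2)_I=(c_1)_{I_1}\# (c_2)_{I_2}$ valid modulo $\cA^{(4)}$ by the first step. The unit is the empty chord diagram, and the counit is projection onto $\cA_0=\KK$; both compatibilities are straightforward. I expect the main obstacle to be the first step, namely the explicit identification of the puncture-motion difference with a $4$-term element, since it requires a careful case analysis depending on whether $\alpha$ is adjacent to another endpoint already involved in the gluing, and on the framing/orientability data implicit in the cyclic ordering around the glued vertex.
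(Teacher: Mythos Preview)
The paper does not actually prove this theorem: it is stated as a known result with a reference to~\cite[Theorem~6.1.12]{LZ05}, and the surrounding text only says that ``it is not hard to show that all such chord diagrams are congruent modulo the four-term relations.'' So there is no paper proof to compare line-by-line; your outline is the standard one found in the references.

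That said, there is a genuine gap in your first step. Moving the puncture on $c_1$ past one endpoint of a chord $\alpha$ does \emph{not} correspond to a single Vassiliev move: in the glued diagram this moves the endpoint of $\alpha$ from one side of the entire $c_2$-block to the other, i.e.\ past all $2m$ endpoints of the $m$ chords of $c_2$. Your sentence ``let $\beta$ be the chord of $c_2$ neighboring the gluing locus \dots\ identifies the four diagrams \dots\ with exactly the four terms of a $4$-term element'' is therefore not correct as written; the difference between the two connected sums is not a single four-term element. The standard argument is iterative: one writes the displacement of the endpoint of $\alpha$ across the $c_2$-block as a telescoping sum of single-endpoint swaps, and for each swap uses the four-term relation $c-v_1(c)=v_2(c)-v_1v_2(c)$ to trade the local swap for a move that transports $\alpha$'s endpoint to the far side of the relevant chord of $c_2$. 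Repeating this for each chord of $c_2$ eventually cancels all terms. An equivalent packaging is the ``Kirchhoff law'' lemma (the sum of the differences obtained by moving one endpoint of $\alpha$ once around the circle vanishes modulo four-term relations). Either way, some form of induction on $m$ is essential, and you should make it explicit.

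Your treatment of the coproduct is essentially fine, though your wording ``does not alter $c_I$ or $c_{I'}$ at all'' is slightly inaccurate for the $v_2$-move when $\alpha\in I$, $\beta\in I'$: in that case $(v_2 c)_{I'}$ can differ from $c_{I'}$, but one still has $(c)_{I'}=(v_1 c)_{I'}$ and $(v_2 c)_{I'}=(v_1 v_2 c)_{I'}$, so the four terms do cancel in pairs as you claim.
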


\subsection{The bialgebra of graphs}\label{ssec:bialg_graphs}

In this subsection we define the bialgebra of graphs. This is a purely combinatorial anagolue of the bialgebra of chord diagrams. It was introduced by Lando in \cite{Lando00}. We mostly follow this paper and \cite[Sec.~6.4]{LZ05}.

Let $\cG_n$ be a graded $\KK$-vector space freely spanned by all graphs (not necessarily connected) on $n$ vertices, and let $\cG=\bigoplus_{n\geq 0}\cG_n$. This vector space admits a structure of a bialgebra in the following way. The multiplication $m\colon\cG_k\otimes\cG_n\to\cG_{k+n}$ brings a pair of graphs into their disjoint union. The unit of this multiplication is represented by the empty graph.

For a graph $\Gamma$ and a subset $J\subset V(\Gamma)$ of its vertices, denote by $\Gamma_J$ the \emph{restriction} of $\Gamma$ to the set $J$: a subgraph of $\Gamma$ formed by the vertices from $J$ and the edges of $\Gamma$ such that both their ends belong to $J$. Then the comultiplication $\Delta\colon\cG\to\cG\otimes\cG$ is defined as follows:
\[
 \Delta(\Gamma)=\sum_{J\sqcup J'=V(\Gamma)} \Gamma_J\otimes\Gamma_{J'}.
\]
It resembles the comultiplication in the bialgebra of chord diagrams. We will see that there is a close relation between these two bialgebras.

In Subsection~\ref{ssec:IntGraph} we have assigned to each chord diagram $c$ of order $n$ its intersection graph $\Gamma(c)$. Thus we have obtained a map $\iota\colon\cA\mapsto\cG$. As we have already discussed, this map is neither injective nor surjective.
However, the notion of Vassiliev moves and 4-elements can be extended to $\cG$. This is done as follows. Let $A$ and $B$ be two distinct vertices of a graph $\Gamma$. Define the \emph{first Vassiliev move} $v_1(\Gamma)$ as the graph obtained from $\Gamma$ by removing the edge $AB$ if this edge exists or by adding this edge otherwise. The definition of the \emph{second Vassiliev move} $v_2(\Gamma)$ is defined as follows: for each vertex $C\in V(\Gamma)\setminus\{A,B\}$, we change its adjacency with $A$ if $C$ is joined with $B$, and do nothing otherwise. All other edges in this graph remain the same. Note that $v_2(\Gamma)$ depends on the order of the vertices $(A,B)$. Clearly, $v_1$ and $v_2$ commute.

\begin{remark} In the original paper \cite{Lando00} $v_1(\Gamma)$ and $v_2(\Gamma)$ are denoted by $\Gamma'$ and $\widetilde\Gamma$, respectively.
\end{remark}

As in the case of chord diagrams, we define a \emph{4-element} as
\[
 \Gamma-v_1(\Gamma)-v_2(\Gamma)+v_1v_2(\Gamma).
\]
All 4-elements span a subspace in $\cG$ denoted by $\cG^{(4)}$. We also denote the quotient of $\cG$ modulo the subspace of 4-elements by $\cF$:
\[
 \cF=\cG/\cG^{(4)}.
\]
 $\cF$ is called the \emph{4-bialgebra of graphs}. One can check that 4-elements are compatible with the multiplication and comultiplication, so the following theorem holds.

\begin{theorem} $\cF$ is a commutative and cocommutative bialgebra with respect to the multiplication and comultiplication described above. 
\end{theorem}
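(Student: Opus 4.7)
The plan is to first verify that $\cG$ itself is a commutative and cocommutative bialgebra with respect to disjoint union and the restriction coproduct, and then to show that $\cG^{(4)}$ is simultaneously a two-sided ideal and a two-sided coideal; the quotient $\cF = \cG/\cG^{(4)}$ will then inherit a bialgebra structure with the same properties. Commutativity of $m$ and cocommutativity of $\mu$ are immediate from the symmetry of disjoint union and from the $J \leftrightarrow J'$ symmetry of $\sum_{J \sqcup J' = V(\Gamma)} \Gamma_J \otimes \Gamma_{J'}$; the bialgebra compatibility $\mu(\Gamma_1 \sqcup \Gamma_2) = \mu(\Gamma_1) \cdot \mu(\Gamma_2)$ holds because any partition of $V(\Gamma_1) \sqcup V(\Gamma_2)$ splits canonically into a pair of partitions of the $V(\Gamma_i)$, and restrictions of a disjoint union are disjoint unions of restrictions (no cross-edges exist). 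The counit that kills every nonempty graph vanishes on 4-elements trivially, as all four summands have the same (positive) vertex count.

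For the ideal property, fix a 4-element associated to an ordered pair $A, B \in V(\Gamma)$ and any graph $\Gamma'$. Since $v_1^{AB}$ and $v_2^{AB}$ only alter edges incident to $A$ (and the edge $AB$ itself), and since $A, B \notin V(\Gamma')$, one has $v_i^{AB}(\Gamma) \sqcup \Gamma' = v_i^{AB}(\Gamma \sqcup \Gamma')$ for $i = 1, 2$. Hence multiplying the 4-element for $(\Gamma, A, B)$ by $\Gamma'$ yields precisely the 4-element for $(\Gamma \sqcup \Gamma', A, B)$, which again lies in $\cG^{(4)}$.

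The coideal property is the most delicate step, namely showing $\mu(\cG^{(4)}) \subset \cG^{(4)} \otimes \cG + \cG \otimes \cG^{(4)}$. Fix $A, B$ and decompose $\mu(\Gamma) - \mu(v_1\Gamma) - \mu(v_2\Gamma) + \mu(v_1 v_2 \Gamma)$ according to the location of $A$ and $B$ in a partition $V(\Gamma) = J \sqcup J'$ (all four graphs share the vertex set of $\Gamma$). If $A, B \in J$, a direct check gives $(v_i^{AB}\Gamma)_J = v_i^{AB}(\Gamma_J)$ for $i = 1, 2$ while the $J'$-restriction is unchanged, so the contribution is a 4-element of $\Gamma_J$ tensored with $\Gamma_{J'}$, lying in $\cG^{(4)} \otimes \cG$. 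The symmetric case $A, B \in J'$ contributes to $\cG \otimes \cG^{(4)}$. In the mixed case, say $A \in J$ and $B \in J'$, the move $v_1$ only flips the cross-edge $AB$ and leaves both restrictions untouched, while $v_2$ modifies $A$'s adjacencies inside $J$ (producing some $\widetilde{\Gamma}_J$) but leaves $\Gamma_{J'}$ alone since $A \notin J'$. The four restrictions then satisfy $(v_1\Gamma)_J = \Gamma_J$, $(v_2\Gamma)_J = (v_1 v_2\Gamma)_J = \widetilde{\Gamma}_J$, and all four $J'$-parts equal $\Gamma_{J'}$, so the alternating sum equals $(\Gamma_J - \Gamma_J - \widetilde{\Gamma}_J + \widetilde{\Gamma}_J) \otimes \Gamma_{J'} = 0$.

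The main obstacle is precisely this mixed-case cancellation: one must track carefully that the definition of $v_2^{AB}$, which uses $B$'s adjacencies in the \emph{ambient} graph, interacts with restriction in such a way that the modified $\widetilde{\Gamma}_J$ appearing in $(v_2\Gamma)_J$ also appears in $(v_1 v_2\Gamma)_J$ (since $v_1$ has no effect on the $J$-restriction), allowing the four terms to telescope. The symmetric mixed case $A \in J'$, $B \in J$ is handled identically. Assembling the contributions from all partitions yields the coideal inclusion, and together with the ideal property and the formal bialgebra compatibilities noted above this shows that $\cF$ is a well-defined commutative and cocommutative bialgebra.
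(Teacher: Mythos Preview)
Your proof is correct and follows precisely the approach the paper alludes to (and which goes back to Lando~\cite{Lando00}): the paper states only that ``one can check that 4-elements are compatible with the multiplication and comultiplication,'' and your argument supplies exactly this check, including the key case-by-case analysis of $\mu$ on a 4-element according to whether $A,B$ lie on the same or opposite sides of the partition $J\sqcup J'$. The mixed-case cancellation you work out is indeed the heart of the matter, and your verification that $(v_2^{AB}\Gamma)_J = v_2^{AB}(\Gamma_J)$ when $A,B\in J$ (because adjacency to $B$ inside $J$ coincides with adjacency to $B$ in $\Gamma$) is the right point to be careful about.
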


Clearly, $\iota(\cA^{(4)})\subset\cG^{(4)}$. So we get a well-defined map of bialgebras
\[
 \overline\iota\colon \cM\to\cF.
\]
This map is known to be non-injective: the injectivity does not hold for the 7-th graded component. Conjecturally, it is surjective (recall that $\iota$ is not, what makes this conjecture nontrivial).

Let us point out two fundamental differences between chord diagrams and graphs. First, the Vassiliev moves for a chord diagram $c\in\cA$ can only be defined for an ordered pair of \emph{neighboring} chords, while for the case of graphs this definition makes sense for an arbitrary ordered pair of vertices. Second, the span of the set of graphs $\cG$ forms a bialgebra even before the factorization over the 4-term relations, while the span of the chord diagrams $\cA$ does not admit a well-defined multiplication. This makes the 4-bialgebra of graphs in a sense ``nicer'' than the bialgebra of chord diagrams.

\subsection{The bialgebra of $L$-spaces}\label{ssec:bialgebra_lspaces}

A natural idea would be to extend the bialgebra structure on the space spanned by chord diagrams (or, equivalently, orientable one-vertex ribbon graphs) to the space of \emph{all} ribbon graphs. Unfortunately, this fails even on the stage of framed chord diagrams (one-vertex ribbon graphs, not necessarily orientable), modulo the four-term relation. As it was discussed in \cite{Lando06} and shown in \cite{IlyutkoManturov15}, the attempt to multiply framed chord diagrams in a similar way as in the previous subsection fails: this multiplication is not well-defined. (Recently M.~Karev \cite{Karev14} showed that framed chord diagrams form a module over the bialgebra of chord diagrams.) 

However, the $L$-spaces of ribbon graphs do have a bialgebra structure. We have seen earlier that $L$-spaces can be viewed as generalizations of intersection matrices/graphs, so this structure is naturally extended from the graph bialgebra. Just as in the case of graphs, the multiplication and comultiplication of $L$-spaces is well-defined even without the four-term relation.

We have seen in Example~\ref{ex:int} that $L$-spaces of ribbon graphs generalize the notion of intersection matrices for chord diagrams. Intersection matrices are defined by their adjacency graphs; the only difference is that they depend on a specific \emph{ordering} of chords, while the vertices in graphs are not ordered. To define a proper analogue of an intersection matrix, we will consider not just $L$-spaces, but rather their \emph{orbits} under the action of a symmetric group.

Let $\FF_2^{2n}=\langle e_1,f_1,\dots,e_n,f_n\rangle$ be a $2n$-dimensional vector space over $\FF_2$ with a standard skew-symmetric form defined by $(e_i,f_j)=\delta_{ij}$. Consider a Lagrangian Grassmannian $LGr(n)$: this is the set of all maximal (i.e., $n$-dimensional) isotropic subspaces in $\FF_2^{2n}$. We will consider it just as a finite set, without using any additional structure on it. The symmetric group $S_n$ acts on $V$ by simultaneous permutations of~$e_i$'s and~$f_i$'s. This action preserves the symplectic form and yields an action of $S_n$ on $LGr(n)$. Consider the $\KK$-vector space spanned by the set of orbits $LGr(n)/S_n$ of the latter action; denote it by $\cL_n$. Let
\[
 \cL=\bigoplus_{n\geq 0}\cL_n,
\]
where $\cL_0=\KK$. 

Let $\Gamma\in\cG_n$ be a graph. Pick an arbitrary numbering of its vertices and take the adjacency matrix $M(\Gamma)$. Consider an $n\times 2n$-matrix ${(\Id_{n}\mid M(\Gamma))}$. Its rows span a Lagrangian subspace $L(\Gamma)\subset\FF_2^{2n}$. This gives us an embedding
\[
\cG\to\cL.
\]
of the bialgebra of graphs into $\cL$. Our next goal is to show that $\cL$ has a bialgebra structure compatible with the multiplication and comultiplication on $\cG$. To do this, let us define the operations on $\cL$.

The multiplication on is defined as follows. Let $L_1\subset \FF_2^{2n}$ and $L_2\in \FF_2^{2m}$ be two Lagrangian subspaces; then their direct sum $L_1\oplus L_2\subset \FF_2^{2(m+n)}=\FF_2^{2n}\oplus \FF_2^{2m}$ is also Lagrangian. Then the multiplication map
\[
\cL_n\otimes\cL_m\to \cL_{m+n} 
\]
is defined by
\[
 [L_1]\cdot[L_2]\mapsto [L_1\oplus L_2],
\]
where the square brackets stand for orbits of the symmetric groups on the corresponding Lagrangian Grassmannians. Clearly, this product is well-defined, commutative, associative, has a unit $\{0\}\subset \FF_2^0$, and its restriction to $\cG$ gives the multiplication on $\cG$ obtained by taking the disjoint union of graphs.

To define comultiplication in $\cL$, we need to generalize the operation of restricting a graph to a subset of its vertices. This is done by \emph{symplectic reduction}.

Namely, for any $I\subset \mN=\{1,\dots,n\}$ let 
$$
E_I:=\langle \{e_i \mid i\in I \}\rangle, \quad F_I:=\langle \{f_i \mid i\in I \}\rangle.
$$ 
The space $W_I:=E_I\oplus F_{\mN}$ is then coisotropic: it contains its $(\cdot,\cdot)$-orthogonal complement $W_I^{\perp}=F_J$, where $J=\mN\setminus I$.

Associating to any Lagrangian subspace $L\subset \FF_2^{2n}=E_{\mN}\oplus F_{\mN}$ the space $L|_{I} := \pi_I (L\cap W_I)$, where $\pi_I:W_I\to W_I/W_I^{\perp} = E_I\oplus F_I$ is the natural projection map,  we obtain the desired generalization of the restriction operation. Indeed, for any framed graph $\Gamma$ with the adjacency matrix $M=\left(\begin{smallmatrix} M_{I,I} & M_{I,J} \\ M_{J,I} & M_{J,J}
\end{smallmatrix}\right)$ the associated $L$-space $L(\Gamma)$ is generated by rows of a matrix
\begin{equation}\label{eq:L(D)}
{\left(\left.
\begin{smallmatrix} \Id_I & 0 \\ 0 & \Id_J
\end{smallmatrix} 
\right| 
\begin{smallmatrix} M_{I,I} & M_{I,J} \\ M_{J,I} & M_{J,J}
\end{smallmatrix} 
\right) }.
\end{equation}
Hence, the intersection with $W_I$ maps $L(\Gamma)$ into the space generated by rows of a matrix 
$$
{(\Id_I \, 0 \mid M_{I,I} \, M_{I,J}),}
$$
which is sent by the projection $\pi_I$ to ${(\Id_I \mid M_{I,I})}$, which is exactly the $L$-space~$L(\Gamma|_{I})$ of the graph $\Gamma$ restricted to the subset $I$ of its vertices.

Finally, a standard lemma says that $L|_{I}$ is always a Lagrangian subspace:
\begin{lemma}[Symplectic reduction lemma]\label{l:s-red}
For any Lagrangian subspace $L\subset \FF_2^{2|\mN|}$ and any $I\subset \mN$, the space $L|_{I}\subset \FF_2^{2|I|}$ is also Lagrangian.
\end{lemma}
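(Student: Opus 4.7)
The plan is to establish the two defining properties of a Lagrangian subspace separately: isotropy of $L|_I$ in the symplectic space $E_I\oplus F_I$, and the dimension equality $\dim L|_I=|I|$. Both follow from the standard bookkeeping of the symplectic reduction; the only piece that needs a short argument is the dimension count, and that is where the assumption that $L$ is Lagrangian (not merely isotropic) enters decisively.

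For isotropy, take any $u,v\in L\cap W_I$. Since $L$ is isotropic one has $(u,v)=0$. On the other hand, because $W_I^{\perp}=F_J$ pairs trivially with all of $W_I$, the symplectic form on $\FF_2^{2n}$ descends to a well-defined symplectic form on the quotient $W_I/W_I^{\perp}\cong E_I\oplus F_I$, and under the projection $\pi_I$ this descended form satisfies $(\pi_I u,\pi_I v)=(u,v)=0$. Hence $L|_I=\pi_I(L\cap W_I)$ is isotropic.

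For the dimension, I would use two facts. First, since $\ker\pi_I=W_I^{\perp}=F_J$, the rank--nullity theorem applied to $\pi_I|_{L\cap W_I}$ gives
\[
\dim L|_I \;=\; \dim(L\cap W_I)\;-\;\dim(L\cap F_J).
\]
Second, taking symplectic orthogonals and using $L^{\perp}=L$ (here is where being Lagrangian is used) together with $W_I^{\perp}=F_J$, one has the identity
\[
(L+W_I)^{\perp}\;=\;L^{\perp}\cap W_I^{\perp}\;=\;L\cap F_J,
\]
so that $\dim(L\cap F_J)=2n-\dim(L+W_I)$. Combined with the inclusion--exclusion formula $\dim(L+W_I)=\dim L+\dim W_I-\dim(L\cap W_I)=n+(n+|I|)-\dim(L\cap W_I)$, this gives $\dim(L\cap F_J)=\dim(L\cap W_I)-|I|$. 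Substituting back yields $\dim L|_I=|I|$, and together with isotropy this shows $L|_I$ is Lagrangian in $\FF_2^{2|I|}$.

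I expect the main (very mild) obstacle to be keeping the orthogonal-complement identities and the role of $W_I^{\perp}=F_J$ straight: one must check both that the form descends to the quotient (needed for isotropy) and that $L^{\perp}=L$ is used at the right place in the dimension count. Once those two pieces are set up correctly, the argument is purely formal linear algebra over $\FF_2$ and no case analysis on the structure of $L$ is required.
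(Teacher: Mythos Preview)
Your proof is correct and follows essentially the same route as the paper's: isotropy by noting the form descends to the quotient, and the dimension count via $\dim L|_I=\dim(L\cap W_I)-\dim(L\cap W_I^{\perp})$ together with $L^{\perp}=L$ and inclusion--exclusion for $\dim(L+W_I)$. The only cosmetic difference is that you name $W_I^{\perp}$ as $F_J$ throughout and unpack the arithmetic one step further.
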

\begin{proof}
The bilinear form $(\cdot,\cdot)$ restricted on~$L$ vanishes. Hence, the same is true for its restriction on $L\cap W_I$, and thus on $\pi_I(L\cap W_I)=L|_{I}$. This means that $L|_{I}$ is isotropic. 

On the other hand,
$$
\dim L|_{I} = \dim (L\cap W_I) - \dim ((L\cap W_I)\cap W_I^{\perp}) = \dim (L\cap W_I) - \dim (L\cap W_I^{\perp}).
$$
As $L$ is Lagrangian, $L=L^{\perp}$ and hence $\dim (L\cap W_I^{\perp})=\dim (L^{\perp}\cap W_I^{\perp})=\dim ((L+ W_I)^{\perp})$. Thus,
\begin{multline*}
\dim L|_{I} = \dim (L\cap W_I) - \dim ((L+ W_I)^{\perp}) =  \dim (L\cap W_I) - \\ - 2 \dim L + \dim (L+ W_I) = \dim L + \dim W_I - 2 \dim L = |I|.
\end{multline*}
Hence, $L|_{I}\subset \FF_2^{2|I|}$ is Lagrangian.
\end{proof}

Now, define the comultiplication on the space of Lagrangian subspaces as 
$$
\Delta(L):=\sum_{I\subset \mN} L|_I\otimes L|_{\mN\setminus I}.
$$

It is easy to see that for any $I\subset I'\subset \mN$ one has $L|_{I}=(L|_{I'})|_{I}$. This implies coassociativity:
$$
((\Id\otimes \Delta) \circ \Delta)(L) =  ((\Delta\otimes \Id) \circ \Delta) (L) = \sum_{I_1\sqcup I_2\sqcup I_3=\mN} L|_{I_1}\otimes L|_{I_2}\otimes L|_{I_3}.
$$

Summarizing, we obtain the following theorem.
\begin{theorem} $\cL$ is a commutative, cocommutative, associative and coassociative bialgebra with respect to the operations described above. The canonical embedding $\cG\hookrightarrow\cL$ of the bialgebra of graphs is a homomorphism of bialgebras.
\end{theorem}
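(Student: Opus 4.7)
The plan is to check, in order: (i)~well-definedness of the product and coproduct on $S_n$-orbits; (ii)~associativity, commutativity, and existence of a unit for the product; (iii)~coassociativity and cocommutativity of the coproduct; (iv)~the bialgebra compatibility $\mu(L_1\cdot L_2)=\mu(L_1)\cdot\mu(L_2)$; and (v)~that both operations are intertwined by the embedding $\cG\hookrightarrow\cL$. Most of these are essentially formal once the definitions are unwound; the only substantive content is a single structural identity about how symplectic reduction interacts with direct sums, from which (v)~then follows almost immediately via the explicit description of $L(\Gamma)$ already given.

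For (i), any pair $(\sigma_1,\sigma_2)\in S_n\times S_m$ embeds diagonally into $S_{n+m}$ permuting the basis pairs $(e_i,f_i)$, and $\sigma_1\cdot L_1\oplus\sigma_2\cdot L_2=(\sigma_1\times\sigma_2)\cdot(L_1\oplus L_2)$, so the orbit of $L_1\oplus L_2$ depends only on $[L_1]$ and $[L_2]$. For the coproduct, a permutation $\sigma\in S_n$ satisfies $(\sigma\cdot L)|_I=\tilde\sigma(L|_{\sigma^{-1}(I)})$, where $\tilde\sigma$ is the induced permutation on the reduced space; since the defining sum ranges over all subsets of $\mN$, this is merely a reindexing and yields the same orbit sum. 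For (ii), associativity of $\oplus$ is automatic, commutativity is the block-swap permutation in $S_{n+m}$, and $\{0\}\subset\FF_2^0$ is the unit. For (iii), cocommutativity is the involution $I\mapsto\mN\setminus I$; coassociativity reduces to the iteration identity $(L|_{I'})|_I=L|_I$ for $I\subset I'\subset\mN$, which I would prove by chasing intersections and projections through the tower $W_I\subset W_{I'}$ and their orthogonals. Both iterated coproducts then collapse to the fully symmetric triple sum $\sum_{I_1\sqcup I_2\sqcup I_3=\mN}L|_{I_1}\otimes L|_{I_2}\otimes L|_{I_3}$.

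The heart of the argument is (iv). The plan is to establish the structural identity
$$
(L_1\oplus L_2)\big|_{K_1\sqcup K_2}=L_1|_{K_1}\oplus L_2|_{K_2}
$$
for arbitrary $L_i\subset\FF_2^{2n_i}$ and $K_i\subset\{1,\dots,n_i\}$. This reduces to two elementary facts: under the identification $\FF_2^{2(n_1+n_2)}=\FF_2^{2n_1}\oplus\FF_2^{2n_2}$, the symplectic form splits as an orthogonal direct sum of the two factor forms, so $W_{K_1\sqcup K_2}=W_{K_1}\oplus W_{K_2}$ and likewise for orthogonals; and intersection and projection with respect to a direct-sum-decomposed ambient space respect the direct-sum decomposition. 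Inserting this identity into the defining sum for $\mu(L_1\oplus L_2)$ and reindexing $K$ as $K_1\sqcup K_2$ factors the sum as the product $\mu(L_1)\cdot\mu(L_2)$ in $\cL\otimes\cL$.

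For (v), the paper has already shown via~\eqref{eq:L(D)} that $L(\Gamma_I)=L(\Gamma)|_I$; and the disjoint union $\Gamma_1\sqcup\Gamma_2$ has a block-diagonal intersection matrix whose row span is exactly $L(\Gamma_1)\oplus L(\Gamma_2)$, so the embedding intertwines both operations. The main obstacle I anticipate is (iv)---specifically, the careful verification that the symplectic form and the operation $W\mapsto W^\perp$ respect the ambient direct-sum decomposition---but once that is in hand, the bialgebra compatibility is essentially bookkeeping.
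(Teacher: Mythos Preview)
Your proposal is correct and follows essentially the same route as the paper: the paper builds up exactly the pieces you list (well-definedness and properties of the product, the symplectic reduction lemma, the iteration identity $(L|_{I'})|_I=L|_I$ for coassociativity, and the computation~\eqref{eq:L(D)} showing $L(\Gamma)|_I=L(\Gamma_I)$), and then simply states the theorem as a summary without a separate proof block.

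The one notable difference is that you are more careful than the paper on point~(iv): the paper never explicitly verifies the bialgebra compatibility $\mu(L_1\cdot L_2)=\mu(L_1)\cdot\mu(L_2)$, whereas you isolate and prove the structural identity $(L_1\oplus L_2)|_{K_1\sqcup K_2}=L_1|_{K_1}\oplus L_2|_{K_2}$ that makes it work. This is the right way to close the gap, and your argument for it (the symplectic form, $W_K$, and $W_K^\perp$ all split as orthogonal direct sums, so intersection and projection factor) is correct and straightforward. Your treatment of well-definedness on $S_n$-orbits is likewise more explicit than the paper's. So your write-up is, if anything, a more complete version of the paper's own argument.
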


\subsection{The four-bialgebra of $L$-spaces}\label{ssec:bialgebra_4lspaces}

Similarly to the cases of chord diagrams and graphs, we can introduce the notion of four-elements in the bialgebra $\cL$ and consider the quotient of $\cL$ modulo the ideal of four-elements.

Namely, let $L\subset\FF_2^{2n}$ be a Lagrangian subspace, regarded as an element of $\cL_n$. As in all previous cases (for chord diagrams, non-framed and framed graphs), we can define Vassiliev moves acting on $L$ as in Subsection~\ref{ssec:vassiliev}. Let $v_1$ and $v_2$ be two symplectomorphisms on $\FF_2^{2n}$, defined as in Theorem~\ref{thm:lspaceformulas}. The images $v_1(L)$ and $v_2(L)$ of $L$ under these transforms can be again regarded as elements of $\cL_n$. Then
\[
 L-v_1(L)-v_2(L)+v_1v_2(L)
\]
is said to be a \emph{four-element}. As before, we denote by $\cL^{(4)}$ the homogeneous ideal in $\cL$ generated by all 4-elements. The quotient of $\cL$ modulo this ideal is denoted by $\cK$. It also inherits a natural grading from $\cL$: 
\[
 \cK=\bigoplus_{n\geq 0}\cK_n.
\]
\begin{theorem}
 The multiplication and comultiplication defined above turn $\cK$ into a commutative and cocommutative bialgebra. 
\end{theorem}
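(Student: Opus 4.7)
The plan is to show that the subspace $\cL^{(4)}$ of $4$-elements is a biideal of $\cL$: by construction it is already a two-sided ideal, so the crucial point is to verify that it is also a coideal, i.e., that $\mu(\cL^{(4)})\subset \cL^{(4)}\otimes\cL+\cL\otimes\cL^{(4)}$. Once this is established, the bialgebra structure of $\cL$ descends to $\cK=\cL/\cL^{(4)}$, and commutativity, cocommutativity, associativity, coassociativity and the bialgebra axiom all follow from the corresponding properties of $\cL$ proved in the previous theorem; the counit is compatible because every $4$-element lives in degree at least two.

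Since $\mu$ is an algebra homomorphism on $\cL$ and $\cL^{(4)}$ is the ideal generated by $4$-elements, it suffices to check the coideal inclusion on a single generator $x := L - v_1(L) - v_2(L) + v_1v_2(L)$, where $v_1,v_2$ are the symplectomorphisms of $\FF_2^{2n}$ from Theorem~\ref{thm:lspaceformulas} associated with a chosen pair of chords $i,j$. Expanding
\[
\mu(x)=\sum_{I\sqcup J=\mN}\Bigl( L|_I\otimes L|_J - v_1(L)|_I\otimes v_1(L)|_J - v_2(L)|_I\otimes v_2(L)|_J + v_1v_2(L)|_I\otimes v_1v_2(L)|_J\Bigr),
\]
the strategy is to split the sum according to the position of the pair $\{i,j\}$ with respect to the partition $I\sqcup J$. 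In the \emph{pure} case $\{i,j\}\subset I$, the symplectomorphisms only touch basis vectors indexed by elements of $\{i,j\}\subset I$, hence stabilize both $W_I$ and $W_I^{\perp}=F_J$ and consequently commute with the symplectic reduction $L\mapsto L|_I$; a short direct check gives $v_k(L)|_I = v_k(L|_I)$ and $v_k(L)|_J = L|_J$, so the partial sum over such $I$ becomes $\sum_{\{i,j\}\subset I}\bigl[L|_I-v_1(L|_I)-v_2(L|_I)+v_1v_2(L|_I)\bigr]\otimes L|_J$, which lies in $\cL^{(4)}\otimes\cL$. The symmetric case $\{i,j\}\subset J$ yields a term in $\cL\otimes\cL^{(4)}$.

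The main obstacle lies in the \emph{mixed} case $|I\cap\{i,j\}|=1$, say $i\in I$ and $j\in J$. Here the Vassiliev moves genuinely mix the $I$ and $J$ parts: for instance $v_1$ sends $f_i$ to $f_i+e_j$, so it does not preserve $W_I$, and $v_1(L)|_I$ need not equal $L|_I$. The plan is to show that despite this mixing the four tensor products in the corresponding summand of $\mu(x)$ cancel \emph{pointwise}. The key assertion, to be verified by a direct computation with the explicit formulas of Theorem~\ref{thm:lspaceformulas}, is that for such $I$ the collection $\{L|_I,\, v_1(L)|_I,\, v_2(L)|_I,\, v_1v_2(L)|_I\}$ consists of at most two distinct Lagrangians of $\FF_2^{2|I|}$, paired either as $L|_I = v_1(L)|_I$ and $v_2(L)|_I = v_1v_2(L)|_I$, or as $L|_I = v_2(L)|_I$ and $v_1(L)|_I = v_1v_2(L)|_I$; which alternative occurs is controlled by a scalar $\FF_2$-invariant of $L$ measuring how the chord $j$ couples to the rest of $L$ inside $W_{I\cup\{j\}}$ (roughly, whether the functional $v\mapsto b_j$ vanishes on $L\cap W_I$). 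The most delicate point is to verify that the \emph{same} dichotomy simultaneously governs the analogous pairing on the complementary side $J$; this compatibility ultimately reflects the Lagrangian property of $L$ with respect to the global symplectic form. Once matching pairings on both sides are established, the alternating sum of tensor products vanishes in the mixed case, and combined with the two pure cases this yields $\mu(x)\in\cL^{(4)}\otimes\cL+\cL\otimes\cL^{(4)}$, completing the verification.
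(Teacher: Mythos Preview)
Your overall strategy---reduce to showing that $\cL^{(4)}$ is a coideal, check this on a single generating $4$-element, and split the sum $\mu(x)$ according to whether $\{i,j\}\subset I$, $\{i,j\}\subset J$, or $i$ and $j$ are separated---is precisely the shape of Lando's argument for graphs that the paper invokes. The pure cases are handled correctly: when $\{i,j\}\subset I$ the maps $T_1^{ij}$ and $T_2^{ij}$ preserve both $W_I$ and $W_I^{\perp}=F_J$, hence commute with the symplectic reduction, and the partial sum lands in $\cL^{(4)}\otimes\cL$ exactly as you say.

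The gap is in the mixed case. Your proposed dichotomy---that the four restrictions on the $I$-side always fall into one of the two pairings $\{L|_I=v_1(L)|_I,\ v_2(L)|_I=v_1v_2(L)|_I\}$ or $\{L|_I=v_2(L)|_I,\ v_1(L)|_I=v_1v_2(L)|_I\}$, with the \emph{same} alternative on the $J$-side---is false. Take $n=2$, $i=1$, $j=2$, $I=\{1\}$, and $L=\langle e_1+f_1,\, e_2+f_2\rangle$. A direct computation from the formulas of Theorem~\ref{thm:lspaceformulas} gives
\[
v_1(L)=\langle e_1{+}e_2{+}f_1,\ e_1{+}e_2{+}f_2\rangle,\quad
v_2(L)=\langle e_1{+}e_2{+}f_1,\ e_2{+}f_1{+}f_2\rangle,\quad
v_1v_2(L)=\langle e_1{+}f_1,\ f_2\rangle,
\]
and then
\[
L|_I=\langle e_1{+}f_1\rangle,\qquad v_1(L)|_I=\langle f_1\rangle,\qquad v_2(L)|_I=\langle e_1\rangle,\qquad v_1v_2(L)|_I=\langle e_1{+}f_1\rangle.
\]
These are three distinct lines, with only the ``diagonal'' coincidence $L|_I=v_1v_2(L)|_I$; neither of your alternatives occurs. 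On the $J$-side one finds instead $L|_J=v_2(L)|_J=\langle e_2{+}f_2\rangle$ and $v_1(L)|_J=v_1v_2(L)|_J=\langle f_2\rangle$, so the two sides do not even obey the same pattern. The resulting $I=\{1\}$ summand of $\mu(x)$ does not vanish, nor does it cancel against the $I=\{2\}$ summand: their sum in $\cL_1\otimes\cL_1$ is $2\,G\otimes G-2\,F\otimes F-E\otimes G-G\otimes E+G\otimes F+F\otimes G$ (with $E,F,G$ the three Lagrangian lines in $\FF_2^{2}$), which is nonzero. Since $\cL^{(4)}$ has no component in degree~$1$, a nonzero degree-$(1,1)$ piece cannot lie in $\cL^{(4)}\otimes\cL+\cL\otimes\cL^{(4)}$. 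Thus your mechanism fails, and in fact the verbatim transplant of Lando's graph argument (which for graphs relies on $v_1$ and $v_2$ each leaving one of the two restrictions literally unchanged) does not go through for arbitrary Lagrangian subspaces; the paper's one-line appeal to \cite[Theorem~2.4]{Lando00} leaves the mixed case unresolved.
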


\begin{proof}
The only thing we need to check is that the multiplication and comultiplication respect the four-term relation. For multiplication this is obvious, while for comultiplication the proof repeats the corresponding proof for graphs, see~\cite[Theorem~2.4]{Lando00}.
\end{proof}

\subsection{Some remarks and open questions}

For the bialgebras of chord diagrams and graphs one can be interested in the sequences of dimensions of graded components of their 4-bialgebras $\dim\cM_n$ and $\dim\cF_n$. These sequences are quite mysterious; closed formulas or generating functions for them are unknown. The computations were carried out by Vassiliev, Bar-Natan, Kneissler, Lando and Soboleva; see \cite[Sec.~6,1,~6.4]{LZ05} for an overview. It would be interesting to look at the beginning of the corresponding sequence for the four-bialgebra of $L$-spaces. So the first problem is as follows.

\begin{problem} Compute $\dim\cK_n$.
 \end{problem}

Another question returns us to the definition of $L$-spaces for ribbon graphs:

\begin{problem} Are all elements of $\cK_n$ obtained as images of linear combinations of actual ribbon graphs with $n$ edges? If not, how close is this map to surjection, i.e., what is the codimension of its image?
\end{problem}

In Sec.~\ref{ssec:def_lspace} we have considered the bialgebra of $L$-spaces, generated by Lagrangian subspaces in  a symplectic vector space over $\FF_2$. However, we can consider the set of Lagrangian subspaces over an arbitrary field of finite or infinite characteristic or even over $\ZZ$. If the ground field is finite, we can span a bialgebra by this set, just as in Sec.~\ref{ssec:chord}. The formulas from 
Theorem~\ref{thm:lspaceformulas} define Vassiliev moves on Lagrangian subspaces; these moves are not involutive anymore, their order is equal to the characteristic of the ground field. An easy check shows that these moves still commute. Hence our last question:

\begin{problem}
What is the right analogue of four-term relations for the bialgebra of Lagrangian subspaces over $\FF_p$ or $\ZZ$? What bialgebra is obtained after factorization over these relations?
\end{problem}

A close object to those studied in this paper is the one of delta-matroids: one can associate a delta-matroid to any ribbon graph~\cite{Moffatt2014}. It looks interesting to investigate this link further; in particular, after this paper was finished, we were informed of the paper~\cite{Oum2012}, where Lagrangian subspaces are considered in relation with delta-matroids, as well as of the text in preparation by Lando and Zhukov~\cite{LandoZhukov2015}, where a graded bialgebra, related to delta-matroids, is introduced.

\section{Miscellaneous}\label{sec:misc}

\subsection{Action of Morse perestroikas in the one-component case}

Consider now the particular case of one-vertex ribbon graphs and their intersection matrices. A question that naturally arises, is when a perestroika maps a one-vertex diagram to a one-vertex graph. The following proposition, first obtained (in slightly different terms, see Remark~\ref{r:bridges} below) by Cohn and Lempel~\cite[Thm. 1]{CL1972}, answers it:

\begin{proposition}\label{p:M-pi}
Let $G$ be a one-vertex ribbon graph (or, what is the same, a framed chord diagram), and let $J\subset \mN$ be a set of indices. The image $\mu_{J}(G)$ is a one-vertex ribbon graph if and only if the minor $\det H$ of the intersection matrix $M=\left(
\begin{smallmatrix} A & B \\
B^* & H
\end{smallmatrix} 
\right)$, corresponding to the set $J$ of indices, is non-zero.
\end{proposition}

\begin{proof}
Note first that a ribbon graph $G'$ has only one vertex if and only if the corresponding $L$-space $L(G')$ is transversal to the subspace~${F}_\mN$. In one direction it can be seen immediately out of~\eqref{eq:L(D)}, in the other one, one can easily see that the image of any component under the map~$\varphi$ from Def.~\ref{def:lspace} belongs to~${F}_{\mN}$.

Applying now Theorem~\ref{thm:lspaceformulas} to~\eqref{eq:L(D)}, we see that the $L$-space $L(\mu_J(G))$ can be generated by rows of the matrix
\begin{equation}\label{eq:pi}
{\left(\left.
\begin{smallmatrix} \Id_I & B \\ 0 & H
\end{smallmatrix} 
\right|
\begin{smallmatrix} A & 0 \\ B^{{*}} & \Id_J
\end{smallmatrix} 
\right)}.
\end{equation}
Now, $L(\mu_J(G))$ is transverse to ${F}_\mN$ if and only if the right half of the matrix~\eqref{eq:pi} is non-degenerate, what is in its turn equivalent to the non-degeneracy of the submatrix~$H$.
\end{proof}

\begin{remark}\label{r:bridges}
In a slightly different language, Prop.~\ref{p:M-pi} is discussed also in~\cite[Remark 6.4.17]{LZ05} and \cite[4.8.6]{CDM} (see also Moran~\cite{Moran}). 
Namely, given a one-component chord diagram, one can thicken the boundary circle, and replace the chords corresponding to the subset $J$ of indices by ``bridges'' (see Fig.~\ref{f:AB}). Then,~\cite[Remark 6.4.17]{LZ05} states that the minor of the intersection matrix corresponding to these chords is non-zero if and only if the ``interior'' boundary of the obtained figure is also a circle. To conclude the construction of $\mu_J(C)$, it suffices now to shift the non-thickened chords endpoints from the initial boundary circle to the interior one, and replace the thickened chords by ``crossings'' of the corresponding bridges. 
\end{remark}

Moreover, if the matrix $H$ in Prop.~\ref{p:M-pi} is non-degenerate, we can write an explicit formula for the new intersection matrix:
\begin{proposition}\label{p:ABH}
Let $M$ be the intersection matrix of a one-vertex ribbon graph~$G$ with an intersection matrix $M=\left(
\begin{smallmatrix} A & B \\
B^* & H
\end{smallmatrix} 
\right)$, and the minor $\det H$ is non-zero. Then, the intersection matrix of the one-component chord diagram $\mu_J(G)$ is 
\begin{equation}\label{eq:gen-pivot}
\left(
\begin{matrix} 
A +  BH^{-1}B^* & BH^{-1} \\
H^{-1}B^* & H^{-1}
\end{matrix} 
\right)
\end{equation}
\end{proposition}
\begin{proof}
It suffices to pass from the basis~\eqref{eq:pi} to a basis of the form~\eqref{eq:L(D)}. To do so, we multiply the rows, corresponding to the set $J$ of indices, by {$H^{-1}$}, and add the resulting rows to those corresponding to $I$ ones with a multiplication by~$B$, obtaining the matrix
$$
{\left(\left.
\begin{matrix} 
\Id_{I} & 0\\
0 & \Id_{J}
\end{matrix}
\, \right|\,
\begin{matrix} 
A +  BH^{-1}B^* &  BH^{-1} \\
H^{-1}B^* & H^{-1}
\end{matrix} 
\right)}
$$

\end{proof}

\begin{example}\label{ex:ab}
Two simplest examples when Prop.~\ref{p:ABH} is applicable are perestroikas with respect to a single disorienting chord and with respect to two intersecting orientable chords.  
The explicit formula~\eqref{eq:gen-pivot} in these cases respectively becomes:
$$
\left(
\begin{matrix} 
M' & v_a \\
v_a^* & 1
\end{matrix} 
\right) \mapsto \left(
\begin{matrix}  M'+v_av_a^* & v_a \\
v_a^* & 1
\end{matrix} 
\right)
$$
and 
$$
\left(
\begin{matrix} 
M' & v_a & v_b \\
v_a^* & 0 & 1 \\
v_b^* & 1 & 0
\end{matrix} 
\right) \mapsto \left(
\begin{matrix}  
M'+v_av_b^* + v_bv_a^* & v_b & v_a \\
v_b^* & 0 & 1 \\
v_a^* & 1 & 0
\end{matrix} 
\right)
$$
The last formula can also be found in \cite[Lemma 1]{CL1972}.
\end{example}

As we will see in the next section, they are related to the \emph{local complementation} and \emph{pivot} graph operations, respectively.

\begin{figure}
\center
\center
\includegraphics{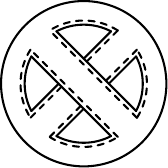}
\caption{Two thickened intersecting chords and the interior circle}\label{f:AB}
\end{figure}

\subsection{Pivot and local complementation on graphs} In \cite{ABS02a} and \cite{ABS02b} Arratia, Bollob\'as and Sorkin define the \emph{interlace polynomial} of a framed graph. This graph invariant (in its two-variable version) is defined as follows:
\[
q(G,x,y)=\sum_{S\subset V(G)} (x-1)^{r(G[S])} (y-1)^{n(G[S])},
\]
where $G$ is a graph, $V(G)$ is its set of vertices, $S$ in the sum runs over all $2^{|V(G)|}$ subsets of $V(G)$, and $r(G[S])$ and $n(G[S])$ stand for the rank and the nullity of the adjacency matrix of $G$ restricted to the set of vertices~$S$. This definition is somewhat similar to the definition of the Tutte polynomial of a graph.

Later N.~Netrusova \cite{Netrusova} proved that for non-framed graphs the interlace polynomial is a 4-invariant and, as a corollary, defines a knot invariant.

The interlace polynomial of a graph can be computed inductively, by reducing the computation to graphs with the smaller number of vertices. These reduction formulas use the operations of \emph{local complementation} $G\mapsto G^a$ with respect to a vertex $a$, provided that $a$ is odd, and \emph{pivot} $G\mapsto G^{ab}$ with respect to an edge $ab$ of a graph, provided that both $a$ and $b$ are even. See \cite[Thm~3, Thm~6]{ABS02b} for details.

These two operations have a very simple interpretation in the language of $L$-spaces. Consider the $L$-space $L(G)$ of a graph $G$: if $A=A(G)$ is the adjacency matrix of $G$, then $L(G)$ is spanned by the rows of the matrix $(\Id_n|A)$.

Then taking the local complementation with respect to a vertex $a$ corresponds to the Morse perestroika $\mu_a(L(G))$. Note that since $a$ is odd, then $\mu_a(L(G))$ still intersects $E_\mN$ transversely, i.e., can be presented in the form $(\Id_n|\mu_a(A))$. Comparing the formulas from \cite[Lemma~5]{ABS02b} and from Example~\ref{ex:ab}, one can see that the matrix $\mu_a(A)$ is nothing but the adjacency matrix of the graph $G^a$ obtained from $G$ by the local complementation in $a$. In other words, the local complementation is just a Morse perestroika in an odd vertex.

Similarly one can see that the pivot with respect to an edge $ab$ joining two even vertices is the composition of two Morse perestroikas $\mu_a\mu_b$ followed by the change of labels on these two vertices: the vertex $a$ becomes $b$, and vice versa.

\subsection{Remarks on plane curves}

The original motivation for defining the bialgebra of chord diagrams was the study of Vassiliev knot invariants. Chord diagrams correspond to singular knots, i.e., knots with finitely many transverse simple self-intersections (these are the elements of the main stratum of the discriminant set for the space of knots). The four-term relation then comes from different ways to resolve a triple intersection in singular knots, and the finite-type knot invariants are linear functions on the bialgebra of chord diagrams. Similarly, the graph bialgebra can also be used for constructing such invariants (\cite{CDM}, \cite{LZ05}).

There is another topological object similar to knots: plane curves. The study of their invariants began with V.~Arnold's paper \cite{Arnold93}. These are oriented real  plane curves, possibly with finitely many transversal self-intersections. Their invariants can be studied in a way similar to Vassiliev knot invariants; the main stratum of the discriminant for them consists of plane curves with finitely many \emph{self-tangencies} of quadratic type. Each self-tangency can be either direct, where the velocity vectors of the curve at the self-tangency point have the same direction, or inverse, where they point in different directions. Thus, a plane curve gives us a \emph{framed} chord diagram, with even/odd framing corresponding to direct and inverse self-tangencies, respectively. For them one can carry out more or less the same program as in the case of knots, defining Vassiliev moves, four-term relation etc.; see \cite{Goryunov98} for details. 
{However (see the discussion in~\cite{Lando06}), for a certain time it was supposed that the {framed} 
chord diagrams modulo these relations do not form a bialgebra. And indeed, it was shown recently by 
Ilyutko and Manturov in~\cite{IlyutkoManturov15}.}

\begin{figure}[h!]
\center
\includegraphics[scale=0.5]{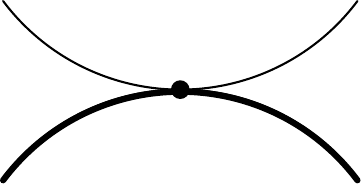} \qquad 
\includegraphics[scale=0.5]{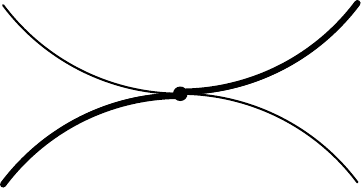}
\caption{A conjectural action of Morse perestroika on plane curves}\label{fig:curve_perestroika}
\end{figure}

In a  similar way each \emph{multicomponent} plane curve with self-tangencies gives rise to a ribbon graph. However, a problem arises while trying to define Morse perestroikas in such a way that they would bring a smooth multicomponent plane curve into another smooth multicomponent plane curve from the main stratum (i.e., with quadratic self-tangencies). We would expect a Morse perestroika to switch the branches in a neighborhood of a self-tangency point, as shown on Fig.~\ref{fig:curve_perestroika}, but this operation would bring a quadratic self-tangency into a cubic one, and the latter curve is not in the main discriminant stratum.

\section*{Acknowledgements}
This work grew out of a question by S.~Lando about  generalization of the notion of interlace polynomials to framed graphs. We would like to thank S.~Lando, M.~Kazaryan and V.~Vassiliev for useful discussions and comments. We also express our gratitude to N.~Netrusova and A.~Vorontsov; we benefited a lot from discussions with them. Finally, we would like to thank the referee for his careful reading, useful references and helpful remarks.

The article was prepared within the framework of a subsidy granted to the HSE by the Government of the Russian Federation for the implementation of the Global Competitiveness Program. The authors were partially supported by RFBR projects 13-01-00969-a and 16-01-00748-a (V.K.), Dynasty foundation and Simons-IUM fellowship (E.S.). This work started during E.S.'s visit to the University of Rennes~1; we are grateful to this institution for excellent working conditions and to CNRS for the financial support of the visit.

We dedicate this paper to the memory of our dear friend and teacher Sergei Vassilievich Duzhin (1956--2015).

\bibliographystyle{plain}
\bibliography{pivots}

\end{document}